\pgfplotsset{compat=1.16}
\newcommand{\norm}[1]{\left \vert\left \vert #1 \right \vert\right \vert}
\newcommand{\maxs}[1]{\max \left\lbrace #1 \right\rbrace}
\newcommand{\mins}[1]{\min \left\lbrace #1 \right\rbrace}
\newcommand{\modi}[1]{\text{mod}\!\left( #1 \right)}
\newcommand{\floor}[1]{\pmb{\left\lfloor\vphantom{#1}\right.}\! #1 \! \pmb{\left.\vphantom{#1}\right\rfloor}}
\DeclareMathOperator*{\ceil}{ceil}
\DeclareMathOperator*{\sign}{sign}
\DeclareMathAlphabet{\pazocal}{OMS}{zplm}{m}{n}
\definecolor{cblue}{RGB}{0,114,189}
\definecolor{corange}{RGB}{236,176,32}
\definecolor{cgreen}{RGB}{0,127,0}
\definecolor{cred}{RGB}{217,87,23}
\newtheorem{assumption}{Assumption}
\newtheorem{definition}{Definition}
\newtheorem{proof}{Proof}
\newtheorem{lemma}{Lemma}
\newtheorem{theorem}{Theorem}
\newtheorem{remark}{Remark}
\definecolor{lgray}{gray}{0.95}
\definecolor{lred}{RGB}{255,213,213}
\definecolor{RoyalRed}{RGB}{157,16, 45}
\definecolor{darkblue}{RGB}{0,71,140}
\definecolor{cblue}{RGB}{0,114,189}
\definecolor{corange}{RGB}{236,176,32}
\definecolor{blues}{RGB}{19,127,195}
\definecolor{reds}{RGB}{237,192,186}
\definecolor{cred}{RGB}{217,87,23}
\definecolor{reds2}{RGB}{227,154,145}
\definecolor{cgrey}{RGB}{200,200,200}
\definecolor{grey}{RGB}{160,160,160}
\definecolor{ccyan}{RGB}{173,235,255}
\definecolor{cgreen}{RGB}{0,127,0}
\definecolor{bluess}{RGB}{86,165,184}
\definecolor{greens}{RGB}{177,217,177}
\definecolor{yellows}{RGB}{207,197,99}
\newcommand{\full}[1]{\textcolor{#1}{\protect\tikz[baseline]{\protect\draw[line width=0.3mm] (0,.6ex)--++(0.5,0);}}}
\begin{document}

\title{Extremum Seeking with Intermittent Measurements:\\ A Lie-brackets Approach}


\author{\IEEEauthorblockN{Christophe Labar\IEEEauthorrefmark{1,}\IEEEauthorrefmark{2},
Christian Ebenbauer\IEEEauthorrefmark{2} and
Lorenzo Marconi\IEEEauthorrefmark{1}}
\IEEEauthorblockA{\IEEEauthorrefmark{1} C.A.SY.-DEI, University of Bologna, 40126 Bologna, Italy}
\IEEEauthorblockA{\IEEEauthorrefmark{2} Chair of Intelligent Control System, RWTH Aachen University,  52074 Aachen, Germany}
\thanks{This is an extended version of the paper submitted to IEEE-TAC. This research was supported by the European Project ”AerIal RoBotic technologies for professiOnal seaRch aNd rescuE” (AirBorne), Call: H2020, ICT-25-2016/17, Grant Agreement no: 780960 and the Deutsche Forschungsge-meinschaft, Grant Agreement no: EB 425/8-1. 
Corresponding author: C. Labar (email: christophe.labar@ic.rwth-aachen.de).
}}

%



\IEEEtitleabstractindextext{%
\begin{abstract}
Extremum seeking systems are powerful methods able to steer the input of a (dynamical) cost function towards an optimizer, without any prior knowledge of the cost function. To achieve their objective, they typically combine time-periodic signals with the on-line measurement of the cost. However, in some practical applications, the cost can only be measured during some regular time-intervals, and not continuously, contravening the classical extremum seeking framework. In this paper, we first analyze how existing Lie-bracket based extremum seeking systems behave when being fed with intermittent measurements, instead of continuous ones. We then propose two modifications of those schemes to improve both the convergence time and the steady-state accuracy in presence of intermittent measurements. The performances of the different schemes are compared on a case study.
\end{abstract}

\begin{IEEEkeywords}
Extremum Seeking, Source Seeking, Intermittent Measurements, Lie-bracket Approximation.
\end{IEEEkeywords}}

\maketitle

\IEEEdisplaynontitleabstractindextext

%
\IEEEpeerreviewmaketitle

\section{Introduction}
Extremum seeking systems are powerful methods to steer the output of a dynamical system towards the optimizer of an associated cost function. To achieve their objective, they do not need any prior knowledge of the mathematical expression of the cost function, or the value of its gradient. They solely rely on the on-line measurement of the cost. 

Different techniques have been proposed to estimate an ascent (or descent) direction from the on-line measurement of the cost (see e.g. \cite{Att19}, \cite{Hun14}, \cite{Gua17}, or \cite{Tol17}). In both the cases of averaging and Lie-bracket based extremum seeking, this estimation is obtained by combining the on-line measurement of the cost with time-periodic signals. Depending on this combination, the extremum seeking system may approximate different continuous-time optimization laws, like gradient-descent laws (see e.g. \cite{Lab18}, \cite{Sut19}, or \cite{Gru20}) and Newton-based optimization laws (see e.g. \cite{Lab19}, \cite{Moa10}, or \cite{Gro16}).

Those algorithms have numerous application domains, ranging from the maximization of the power produced by wind turbines via the generator speed (see e.g. \cite{Gha14}, \cite{Rot17} or \cite{Raf18}), to the optimization of the production yield in bio-processes (see e.g. \cite{Bas09}, \cite{Hal19} or \cite{Dew17}), the maximization of the energy efficiency in vapor compression systems (see e.g. \cite{Gua14} or \cite{Koe14}), or the source seeking, when the source emits a scalar signal achieving an optimum at its position (see e.g. \cite{Man20} or \cite{Xu19}). 

Typically, those works assume that the cost may be continuously measured. However, in some cases, only intermittent measurements are available. Namely, the cost can only be measured during some time-intervals. This is notably the case if one considers the application of search and rescue of avalanche victims (see e.g. \cite{Ila20} and \cite{Sil17}), that has motivated the present work. Avalanche beacons used in this context emit an electromagnetic field whose intensity decreases as one gets farther away from them. Since the intensity achieves its maximum at the victim location, finding the victim is equivalent to localizing the maximal field intensity. However, in order to save their batteries and, hence, increase the total time of transmission, the beacons generate the electromagnetic field by pulses of a few tenths of second every second. Following the European standard DIN EN 300718-1, the pulses duration, the break-time, and the total period, should be at least 70ms, 400ms and 1000ms$\pm$300ms, respectively.

Since the on-line measurement of the cost is the only information used by extremum seeking schemes, one may expect their performances to be impacted in case of intermittent measurements. 

Motivated by this application, in this paper, we first analyze how existing extremum seeking algorithms behave when being fed with intermittent measurements, instead of continuous ones. We then propose two main modifications to improve both the steady-state and convergence properties in presence of intermittent measurements.

The rest of the paper is structured as follows. In Section \ref{sec:preliminaries}, the notations and definitions used along the paper, together with the Lie-bracket approximation, are introduced. The considered problem is formally stated in Section \ref{sec:problem}. The main results are presented in Section \ref{sec:main_results}, and the performances obtained with the different schemes are compared through a running case study.
\section{Preliminaries}
\label{sec:preliminaries}
\subsection{Notations and Definitions}
The Euclidean norm of a vector $x\in\mathbb{R}^n$ is denoted by $\norm{x}$. We use $\mathbb{R}_{>0}$, $\mathbb{Q}_{>0}$, and $\mathbb{N}_{>0}$, for the sets of strictly positive real numbers, strictly positive rational numbers, and strictly positive natural numbers, respectively. The gradient and Hessian matrix of a sufficiently differentiable function $h:\mathbb{R}^n\rightarrow\mathbb{R}$ are denoted by $\nabla h(x)$ and $\nabla^2 h(x)$, respectively. The Jacobian of a map $f:\mathbb{R}^n\rightarrow\mathbb{R}^m$ is denoted by $\pazocal{D} f(x)$. Let $f:\mathbb{R}^n\rightarrow\mathbb{R}^n$ and $g:\mathbb{R}^n\rightarrow\mathbb{R}^n$ be two differentiable vector fields. We define the Lie-derivative of $f(x)$ with respect to $g(x)$ by $\pazocal{L}_g f(x)=\pazocal{D} f(x) g(x)$, and the Lie-bracket between $f(x)$ and $g(x)$ by $[f,g](x)=\pazocal{L}_f g(x)-\pazocal{L}_g f(x)$. The remainder of the Euclidean division of $a\in\mathbb{R}$ by $b\in\mathbb{R}\backslash\{0\}$ is denoted by $\modi{a,b}$. We use $\textbf{LCM}(a_1,a_2,...,a_n)$ for the least common multiple of $\{a_1,a_2,...,a_n\}$, with $a_i\in\mathbb{Q}$, for $i\in\{1,2,...,n\}$. The $\delta$-neighborhood of a point $x^*\in\mathbb{R}^n$, with $\delta\in\mathbb{R}_{>0}$, is defined by $U_{\mathbb{R}^n}^{x^*}(\delta)=\{x\in\mathbb{R}^n:\norm{x-x^*}\leq \delta\}$. The function $\sign:\mathbb{R}\rightarrow\{-1,0,1\}$ is defined as $\sign(a)=-1$ if $a<0$, $\sign(a)=0$ if $a=0$, and $\sign(a)=1$ if $a>0$. 

Along the paper, we will also refer to the following notion of semi-global practical uniform asymptotic stability:
\begin{definition}
	\label{def:sgpuas}
	The origin is said to be semi-globally practically uniformly asymptotically stable (sGPUAS) for the system $\dot{x}(t)=f(t,x(t),\epsilon)$, with the vector of parameters $\epsilon\in\mathbb{R}^{n_\epsilon}$, if the following holds. For every $\delta_B\in\mathbb{R}_{>0}$, and $\delta_V\in\mathbb{R}_{>0}$, there exist a $\delta_Q\in\mathbb{R}_{>0}$, a $\delta_W\in\mathbb{R}_{>0}$, and an $\epsilon^*_1\in\mathbb{R}_{>0}$, such that, for all $\epsilon_1\in(0,\epsilon_1^*)$, there exists an $\epsilon^*_2\in\mathbb{R}_{>0}$ such that, for all $\epsilon_2\in(0,\epsilon_2^*)$, ..., there exists an $\epsilon_{n_\epsilon}^*\in\mathbb{R}_{>0}$ such that, for all $\epsilon_{n_\epsilon}\in(0,\epsilon_{n_\epsilon}^*)$, there exists a $t_1\in\mathbb{R}_{>0}$ such that, for all $t_0\in\mathbb{R}$:
	\begin{enumerate}
		\item if $x(t_0)\!\in\!U_{\mathbb{R}^n}^{x^*}(\delta_V)$, then $x(t)\!\in\! U_{\mathbb{R}^n}^{x^*}(\delta_W)$, for all $t\!\geq\!t_0$;
		\item if $x(t_0)\in U_{\mathbb{R}^n}^{x^*}(\delta_Q)$, then $x(t)\in U_{\mathbb{R}^n}^{x^*}(\delta_B)$, for all $t\geq t_0$;
		\item if $x(t_0)\in U_{\mathbb{R}^n}^{x^*}(\delta_V)$, then $x(t)\in U_{\mathbb{R}^n}^{x^*}(\delta_B)$, for all $t\geq t_0+t_1$.
	\end{enumerate}
\end{definition}

Occasionally, Definition \ref{def:sgpuas} is also used for time-delayed systems in the form $\dot{x}(t)\!=\!f(t,x_{[t-T,t]},\epsilon)$, with $T\in\mathbb{R}_{\geq 0}$ and $x_{[t-T,t]}$ the $x$-trajectory on the interval $[t-T,t]$. In that case, its three conditions are supposed to hold with $x_{[t_0-T,t_0]}\in\!U_{\mathbb{R}^n}^{x^*}(\delta_V)$ and $x_{[t_0-T,t_0]}\in\!U_{\mathbb{R}^n}^{x^*}(\delta_B)$, instead of $x(t_0)\!\in\!U_{\mathbb{R}^n}^{x^*}(\delta_V)$ and $x(t_0)\!\in\!U_{\mathbb{R}^n}^{x^*}(\delta_B)$.

Note that, if the value of $\epsilon$ in Definition \ref{def:sgpuas} does not depend on $\delta_B$ and $\delta_V$, and if it also holds $\lim_{t\rightarrow \infty} \norm{x(t)}=0$, then sGPUAS reduces to global uniform asymptotic stability.

\subsection{Lie-bracket Approximation}
\label{sec:LB}
In this section, we introduce the Lie-bracket approximation, that will serve us as a basis in the sequel.

Consider the input-affine system
\begin{equation}
\dot{x}(t)=\sqrt{\omega} \sum_{i=1}^{l} f_i(x(t)\!) u_i(k_i\omega t),
\label{eq:def_input_affine}
\end{equation}
with $x(t)\in\mathbb{R}^n$ the state vector, $u_i(t)\in\mathbb{R}$ the control inputs, $f_i\!:\!\mathbb{R}^n\rightarrow\mathbb{R}^n$ the system vector fields, $k_i\!\in\!\mathbb{Q}_{>0}$, and $\omega\!\in\!\mathbb{R}_{>0}$.

Suppose that the vector fields $f_i$ and the control inputs $u_i$ satisfy the following two assumptions:
\begin{assumption}
	\label{as:dithers}
	For all $i\in\{1,2,...,l\}$, the control input $u_i:\mathbb{R}\rightarrow\mathbb{R}$
	\begin{enumerate}
		\item is a measurable function such that $\sup_{t\in\mathbb{R}} \vert u_i(t) \vert \leq 1$;
		\item is $2\pi$-periodic, i.e. $u_i(t+2\pi)=u_i(t)$, for all $t\in\mathbb{R}$;
		\item has zero mean on a period, i.e. $\displaystyle\int_{0}^{2\pi} u_i(\tau)d\tau=0$.
	\end{enumerate}
\end{assumption}
\begin{assumption}
	\label{as:vector_fields}
	For all $i\in\{1,2,...,l\}$, the vector field $f_i:\mathbb{R}^n\rightarrow\mathbb{R}^n$ is of class $C^2$.
\end{assumption}

Then, the following time-invariant system, called \textit{Lie-bracket system}, may be associated with system \eqref{eq:def_input_affine}
\begin{equation}
\dot{\overline{x}}=\sum_{\substack{1\leq i< l\\ i< j\leq l}} [f_i,f_j](\overline{x})\gamma_{ij},
\label{eq:def_lie_bracket}
\end{equation}
where we introduced
\begin{equation}
\gamma_{ij}:=\frac{\omega}{T}\int_0^T \int_0^{\theta} u_j(k_j\omega \theta)u_i(k_i\omega \tau)\, d\tau\, d\theta,
\label{eq:def_gamma_ij}
\end{equation}
with $T=2\pi\omega^{-1}\textbf{LCM}\left(k_1^{-1},k_2^{-1},...,k_l^{-1}\right)$.  

It may be shown that the trajectory of system \eqref{eq:def_input_affine} approximates the one of the Lie-bracket system \eqref{eq:def_lie_bracket}, with an accuracy that may be made arbitrarily large by increasing the value of $\omega$ (see e.g. Theorem 3.1.1. in \cite{Dur_thesis}). 

A direct consequence of this closeness of trajectories is the following stability result (see e.g. Lemma 3.1 in \cite{Lab18}):
\begin{lemma}
	\label{lem:approx_traj}
	Consider system \eqref{eq:def_input_affine}, and let Assumptions \ref{as:dithers}-\ref{as:vector_fields} hold. Suppose furthermore that the origin is globally asymptotically stable for the Lie-bracket system \eqref{eq:def_lie_bracket}. Then, the origin is sGPUAS for system \eqref{eq:def_input_affine}, with the parameter $\omega^{-1}$.
\end{lemma}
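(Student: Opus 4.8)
The plan is to deduce the statement from the trajectory-approximation theorem of \cite{Dur_thesis} (Theorem 3.1.1) combined with a converse Lyapunov theorem for the Lie-bracket system, lifting the finite-horizon closeness of trajectories to the semi-global practical statement of Definition~\ref{def:sgpuas} through a window-by-window chaining argument on a Lyapunov function.

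First I would invoke a converse Lyapunov theorem: since $x^*=0$ is globally asymptotically stable for the \emph{autonomous} system \eqref{eq:def_lie_bracket}, whose right-hand side is locally Lipschitz (the $f_i$ are $C^2$ by Assumption~\ref{as:vector_fields}, so the brackets $[f_i,f_j]$ are $C^1$), there exist a smooth, positive-definite, radially unbounded $V:\mathbb{R}^n\to\mathbb{R}_{\geq 0}$ and $\alpha_1,\alpha_2,\alpha_3\in\mathcal{K}_\infty$ with $\alpha_1(\norm{\overline{x}})\leq V(\overline{x})\leq\alpha_2(\norm{\overline{x}})$ and $\nabla V(\overline{x})\cdot\sum_{i<j}[f_i,f_j](\overline{x})\gamma_{ij}\leq -\alpha_3(\norm{\overline{x}})$. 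Given $\delta_B,\delta_V$, I would then fix the radii $\delta_W,\delta_Q$ of conditions~1)--2) in the standard way from sublevel sets of $V$: choose a level $c_V$ with $U_{\mathbb{R}^n}^{x^*}(\delta_V)\subseteq\{V\leq c_V\}$, set $\delta_W:=\alpha_1^{-1}(c_V)$, and choose $\delta_Q\leq\alpha_2^{-1}(\alpha_1(\delta_B))$ (to be shrunk further below for a one-window margin). Since the $u_i$ are $2\pi$-periodic (Assumption~\ref{as:dithers}), every estimate below is uniform in the initial time $t_0$, which gives the uniformity required in Definition~\ref{def:sgpuas}; and since $\omega^{-1}$ is the only parameter, the nested-$\epsilon_i$ quantifiers collapse to a single threshold $\epsilon_1^*=(\omega^*)^{-1}$.

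Next I would fix a compact set $K$ --- a sublevel set of $V$ containing $U_{\mathbb{R}^n}^{x^*}(\delta_V)$ with some room to spare --- and a window length $T'>0$ chosen via $\alpha_3$ so that (a) along any solution of \eqref{eq:def_lie_bracket} starting in $K\setminus U_{\mathbb{R}^n}^{x^*}(\delta_B/2)$ the value of $V$ drops by at least some $\eta>0$ over $[0,T']$, and (b) a solution of \eqref{eq:def_lie_bracket} starting in $U_{\mathbb{R}^n}^{x^*}(\delta_B/2)$ stays in $U_{\mathbb{R}^n}^{x^*}(\delta_B)$ on $[0,T']$. By Theorem 3.1.1 of \cite{Dur_thesis}, applied on intervals of length $T'$ and on a neighborhood of $K$, there is an $\omega^*$ such that for $\omega>\omega^*$ every solution of \eqref{eq:def_input_affine} started in $K$ stays within a prescribed $\epsilon$ of the Lie-bracket solution with the same initial condition over each window; taking $\epsilon$ small and using uniform continuity of $V$ near $K$, the change of $V$ along \eqref{eq:def_input_affine} over a window then differs from that along \eqref{eq:def_lie_bracket} by less than $\eta/2$, and in case (b) the solution of \eqref{eq:def_input_affine} likewise stays in $U_{\mathbb{R}^n}^{x^*}(\delta_B)$. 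Iterating over consecutive windows, as long as the solution of \eqref{eq:def_input_affine} remains outside $U_{\mathbb{R}^n}^{x^*}(\delta_B)$ the value of $V$ decreases by at least $\eta/2$ per window while the solution stays in $K$; hence after at most $N:=\lceil 2c_V/\eta\rceil$ windows it enters $U_{\mathbb{R}^n}^{x^*}(\delta_B)$ and, by case (b), cannot leave. This gives condition~3) with $t_1:=NT'$, condition~2) directly, and condition~1) from the same one-window invariance applied to the $\delta_W$-sublevel set.

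The hard part is the feature shared by all averaging/Lie-bracket proofs: Theorem 3.1.1 of \cite{Dur_thesis} only furnishes closeness of trajectories on \emph{compact} time intervals, so one must use $V$ simultaneously to confine the solution to the fixed compact set $K$ where that estimate holds and to patch the per-window decrease estimates into an all-time conclusion. The point that makes this work is that a single $\omega^*$ serves for every window: once the solution is inside $K$ (which it never leaves), periodicity of the $u_i$ makes the window estimate uniform in the window index and in $t_0$, so the errors do not accumulate.
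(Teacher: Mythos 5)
Your proof is correct and follows essentially the same route the paper relies on: the paper does not prove Lemma~\ref{lem:approx_traj} itself but invokes Lemma 3.1 of \cite{Lab18}, which is exactly this combination of the finite-horizon trajectory-closeness result (Theorem 3.1.1 of \cite{Dur_thesis}) with a converse Lyapunov function for the globally asymptotically stable Lie-bracket system and a window-by-window chaining argument confined to a compact sublevel set. The only cosmetic caveat is that your condition (b) should be phrased for sublevel sets of $V$ rather than for the ball $U_{\mathbb{R}^n}^{x^*}(\delta_B/2)$ (which need not be forward invariant for the Lie-bracket system), but the $\alpha_1,\alpha_2$ sandwich you introduce already supplies the required conversion.
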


It is thus possible to approximate the behavior of a system, that involves the Jacobian of some vector fields, by implementing a system that only involves those vector fields. This allows, for instance, to approximate the behavior of a gradient-descent law, by implementing a system that only uses the on-line value of the cost. This strategy may thus be exploited to design extremum seeking systems.

\section{Problem Statement}
\label{sec:problem}
Consider the system
\begin{equation}
\dot{x}=v,
\label{eq:def_system}
\end{equation}
where $x\in\mathbb{R}^n$ is the state vector and $v\in\mathbb{R}^n$ the control input. 

In this work, we aim at designing a control law $v(t)$ that steers system \eqref{eq:def_system} to the minimizer of a cost function $h(x)$. We assume that this cost function has the following properties:

\begin{assumption}
	\label{as:cost}
	The cost function $h:\mathbb{R}^n\rightarrow\mathbb{R}$ belongs to the class $C^2$. Furthermore, there exists an $x^*\in\mathbb{R}^n$ such that $\nabla^T h(x)(x-x^*)>0$, for all $x\in\mathbb{R}^n\backslash\{x^*\}$.
\end{assumption}

\begin{remark}
	Assumption \ref{as:cost} ensures the existence of a unique minimizer $x^*\in\mathbb{R}^n$ for the cost $h(x)$. It also guarantees that $h(x)$ does not have any other stationary point than $x^*$.
\end{remark}

To achieve our objective, neither the mathematical expression of the cost function $h(x)$, nor the value of its gradient $\nabla h(x)$, are known. The only information available is the intermittent measurement of $h(x)$. More precisely, we only know $h_m(t,x(t)\!)$, for all $t\in\mathbb{R}$, with
\begin{equation}
h_m(t,x(t)\!)=\left\lbrace\begin{matrix}
h(x(t)\!) &\text{ if } \modi{t,T_s}\in[0,\epsilon)\\
0 &\text{ if } \modi{t,T_s}\in[\epsilon,T_s),
\end{matrix}\right.
\label{eq:def_hm}
\end{equation}
where $T_s\in\mathbb{R}_{>0}$, and $\epsilon\in(0,T_s]$, are unknown parameters. Those two parameters are solely determined by the transmitter (and receiver) characteristics. An illustration of $h_m(t,x(t)\!)$ and $h(x(t)\!)$ is presented in Figure \ref{fig:intermittent_meas}.

Note that the problem addressed in this work contrasts from the classical continuous-time extremum seeking framework, where $h(x(t)\!)$ is known for all $t\in\mathbb{R}$ (i.e. $h_m(t,x(t)\!)=h(x(t)\!)$, for all $t\in\mathbb{R}$). It also differs from the discrete-time, sample-data, and networked extremum seeking frameworks (see e.g. \cite{Haz19}, \cite{Kho13}, and \cite{Pre20}), where only one measurement of the cost is used on a sampling period. Furthermore, since neither $T_s$ nor $\epsilon$ are known, the selection of an adequate sampling time would not be straightforward.

\begin{figure}
	\centering
	\begin{tikzpicture}
	\begin{axis}[width=0.45\textwidth,height=4cm,x axis line style={-stealth},y axis line style={-stealth},xtick={0,0.2,1,1.2,2,2.2,3},xticklabels={0,$\epsilon$, \hspace{-1mm}$T_s$, \hspace{6mm}$T_s\!+\!\epsilon$, \hspace{-3mm}$2T_s$, \hspace{8mm}$2T_s\!+\!\epsilon$,$3T_s$},ytick={0,2,4},yticklabels={0,2,4},ymax = 5,ymin=-0.4,xmax=3.01,xmin=-0.1,xlabel={Time [$s$]},
	ylabel={Cost [$/$]}]
	\addplot[color=cblue,smooth] coordinates {(-0.1,0.5) (-0.05,0.6) (0,1) (0.05,1.5) (0.1,1.7) (0.15, 2.5) (0.2,3) (0.25,3.1) (0.35,3.7) (0.7,3) (1,3.5) (1.05,3.6) (1.1,3.8) (1.15,3.7 ) (1.2,4) (1.25, 4.1) (1.5,4.5) (1.7,4.3) (1.95, 3.4) (2,3.2) (2.05,2.8) (2.1,2.4) (2.15,2) (2.2,2.2) (2.25,2.3) (2.5,3) (2.7,3.2) (3,4)};
	\addplot[color=corange,smooth, line width=0.45mm] coordinates {(0,1) (0.05,1.5) (0.1,1.7) (0.15, 2.5) (0.2,3)};
	\addplot[color=corange,line width=0.45mm] coordinates {(-0.1,0) (0,0)};
	\addplot[color=corange,smooth,line width=0.45mm] coordinates {(1,3.5) (1.05,3.6) (1.1,3.8) (1.15,3.7 ) (1.2,4)};
	\addplot[color=corange,smooth,line width=0.45mm] coordinates {(2,3.2) (2.05,2.8) (2.1,2.4) (2.15,2) (2.2,2.2)};
	\addplot[color=corange,line width=0.45mm] coordinates {(0.2,0) (1,0)};
	\addplot[color=corange,line width=0.45mm] coordinates {(1.2,0) (2,0)};
	\addplot[color=corange,line width=0.45mm] coordinates {(2.2,0) (3,0)};
	\end{axis}
	\end{tikzpicture}
	\caption{Illustration of the intermittent measurements: continuous value of the cost $h(x(t)\!)$ (\full{cblue}) and intermittent measurement $h_m(t,x(t)\!)$ (\full{corange}).}
	\label{fig:intermittent_meas}
\end{figure}
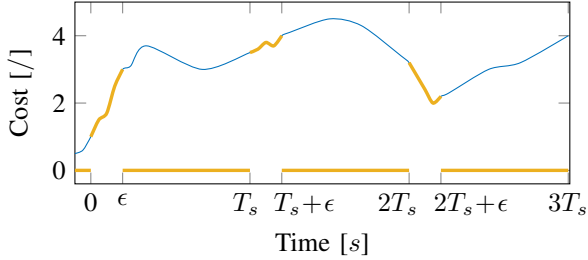
\section{Main Results}
\label{sec:main_results}
To address the problem, we first analyze whether existing extremum seeking systems are able to handle intermittent measurements of the cost. We then modify those schemes to make them tailored to the use of intermittent measurements.
\subsection{Analysis of Classical Extremum Seeking Systems}
\label{sec:basic_scheme}
To start with, we examine extremum seeking systems that may be written in the following form
\begin{equation}
\dot{x}_c(t)=\sqrt{\omega} \sum_{i=1}^{l} f_i(h(x_c(t)\!)\!) u_i(k_i\omega t),
\label{eq:classical_ES}
\end{equation}
where the dithers $u_i(t)$ and the cost $h(x)$ obey Assumptions \ref{as:dithers} and \ref{as:cost}, respectively, and the maps $f_i(y)$ are such that:
\begin{assumption}
	\label{as:vector_fields_gen}
	For all $i\in\{1,2,...,l\}$, the map $f_i:\mathbb{R}\rightarrow\mathbb{R}^n$ is of class $C^2$. Furthermore, for all $y\in\mathbb{R}$, it holds  $\sum_{\genfrac{}{}{0pt}{2}{1\leq i< l}{i< j\leq l}} \left(\pazocal{D} f_j(y) f_i^T(y)-\pazocal{D} f_i(y) f_j^T(y) \right)\gamma_{ij}=-\rho I_{n\times n}$, with $\rho\in\mathbb{R}_{>0}$, and $\gamma_{ij}$ defined in \eqref{eq:def_gamma_ij}.
\end{assumption}

\begin{remark}
	In virtue of Assumption \ref{as:vector_fields_gen}, the Lie-bracket system associated with \eqref{eq:classical_ES} is the gradient-descent law $\dot{\overline{x}}=-\rho\nabla h(\overline{x})$ (see Section \ref{sec:LB}). Referring to Lemma \ref{lem:approx_traj}, and given Assumption \ref{as:cost}, the minimizer of $h(x)$ is thus sGPUAS for the extremum seeking system \eqref{eq:classical_ES}, with the parameter $\omega^{-1}$. Note that Assumption \ref{as:vector_fields_gen} is fulfilled if the vector fields are selected as in \cite{Gru18} (see Theorems 1 and 2 with $F_{0i}\!=\!\rho \gamma_{1i,2i}^{-1}$), which covers the choices in \cite{Dur13} and \cite{Sch14}.
\end{remark}

To get an insight into the behavior of system \eqref{eq:classical_ES} fed with intermittent measurements, instead of continuous ones, namely
\begin{equation}
\dot{x}(t)=\sqrt{\omega} \sum_{i=1}^{l} f_i(h_m(t,x(t)\!)\!) u_i(k_i\omega t),
\label{eq:classical_source}
\end{equation}
with $h_m(t,x)$ defined in \eqref{eq:def_hm}, we consider the following case study: $h(x)=(x-2)^2+10$, $f_1(h(x)\!)=2\rho h(x)$, $f_2(h(x)\!)=1$, $u_1(t)=\cos(t)$, $u_2(t)=\sin(t)$, $k_1=k_2=1$, $\rho=0.25s^{-1}$, $t_0=0s$, and $x(t_0)=-1$.  

For the sake of comparison, the trajectory obtained with continuous measurements is shown in Figure \ref{fig:Krstic_continuous} for $\omega=20\,\pi$ rad/s. In agreement with the intuition, it may be observed in Figures \ref{fig:Krstic_intermittent} and \ref{fig:Krstic_intermittent_unstable} that the value of $\epsilon$ strongly influences the stability properties of \eqref{eq:classical_source}. While the practical convergence to $x^*$ is preserved for $\epsilon=0.1s$ (see Figures \ref{fig:Krstic_continuous} and \ref{fig:Krstic_intermittent}), the trajectory diverges for $\epsilon=0.17s$. This may be explained as follows. For $\epsilon=0.1s$, the time during which the cost measurement is not available is a multiple of the dithers period. Since the dithers have zero mean (see Assumption \ref{as:dithers}), the system state is the same at the beginning and at end of the transmission break. Accordingly, since the dithers have made a integer number of periods, the "averaging" may resume as if there had not been any transmission break. On the other hand, for $\epsilon=0.17s$ (see Figure \ref{fig:Krstic_intermittent_unstable}), the presence of an incomplete dithers period during the transmission break prevents the "averaging" to resume as if there had not been any transmission break. Therefore, both the incomplete dithers periods at the end of the transmission time and break may have a negative impact on the dynamics. More precisely, one may see in Figure \ref{fig:Krstic_intermittent_unstable} that, after the first dithers period (i.e. for $t=0.1s$), it holds $\norm{x(T)-x^*}<\norm{x(0)-x^*}$. However, since $\epsilon$ is smaller than two dithers periods, the "averaging" of the second period cannot be completed, and it results $\norm{x(2T)-x^*}>\norm{x(0)-x^*}$. This deviation of $x$ from $x^*$ is not compensated by the incomplete dithers period during the transmission break, so that $\norm{x(T_s)-x^*}>\norm{x(0)-x^*}$. The same phenomenon occurs during the following periods, leading to the divergence of the trajectory. Fortunately, by increasing the dithers frequency, the impact of the incomplete dithers periods may be made arbitrarily small. This may be observed in Figure \ref{fig:Krstic_intermittent_ok} where $\omega$ was increased to $2002\,\pi$ rad/s, while keeping $\epsilon=0.17s$.

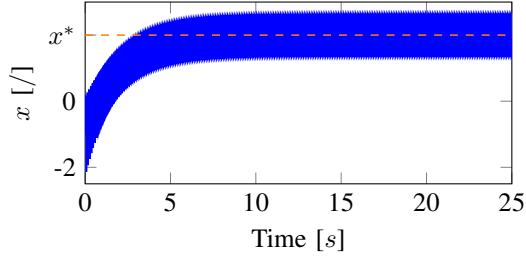
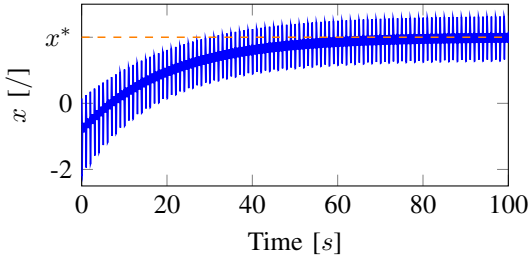
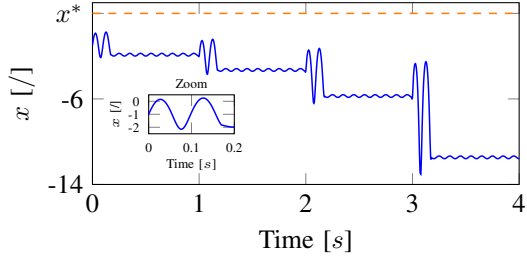
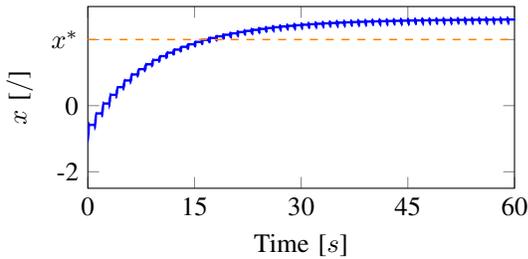
\begin{figure}[!ht]
	\centering
	\subfigure[Trajectory obtained with continuous measurements: $T_s=\epsilon$ and $\omega=20\,\pi\, rad/s$]{
		\begin{tikzpicture}
		\begin{axis}[width=0.4\textwidth,height=4cm,x axis line style={-stealth},y axis line style={-stealth},xtick={0,5,10,15,20,25},xticklabels={0,5,10,15,20,25},ytick={-2,0,2},yticklabels={-2,0,$x^*$}, ymax = 3,ymin=-2.5,xmax=25,xmin=0,xlabel={Time [$s$]},ylabel={$x$ [$/$]},ylabel shift=-2pt]
		\addplot[smooth,color=blue, line width=0.2mm] table[x index=0, y index=1] {Pictures/Krstic_on.dat};
		\addplot[smooth,color=orange, line width=0.2mm, dashed] (0,2)--(25,2);
		\end{axis}
		\end{tikzpicture}
		\label{fig:Krstic_continuous}}
	\subfigure[Trajectory obtained with intermittent measurements: $T_s=1s$, $\epsilon=0.1s$, and $\omega=20\,\pi\, rad/s$]{
		\begin{tikzpicture}
		\begin{axis}[width=0.4\textwidth,height=4cm,x axis line style={-stealth},y axis line style={-stealth},xtick={0,20,40,60,80,100},xticklabels={0,20,40,60,80,100},ytick={-2,0,2},yticklabels={-2,0,$x^*$}, ymax = 3,ymin=-2.5,xmax=100,xmin=0,xlabel={Time [$s$]},	ylabel={$x$ [$/$]},ylabel shift=-2pt]
		\addplot[smooth,color=blue, fill=blue,line width=0.2mm] table[x index=0, y index=1] {Pictures/Krstic_off_3.dat};
		\addplot[smooth,color=orange, line width=0.2mm, dashed] (0,2)--(100,2);
		\end{axis}
		\end{tikzpicture}
		\label{fig:Krstic_intermittent}}
	\subfigure[Trajectory obtained with intermittent measurements: $T_s=1s$, $\epsilon=0.17s$, and $\omega=20\,\pi\, rad/s$]{
		\begin{tikzpicture}
		\begin{axis}[width=0.4\textwidth,height=4cm,x axis line style={-stealth},y axis line style={-stealth},xtick={0,1,2,3,4},xticklabels={0,1,2,3,4},ytick={-14,-6,2},yticklabels={-14,-6,$x^*$}, ymax = 3,ymin=-14,xmax=4,xmin=0,xlabel={Time [$s$]},ylabel={$x$ [$/$]},ylabel shift=-2pt]
		\coordinate (pt) at (axis cs:0,-2.5);
		\addplot[smooth,color=blue, line width=0.2mm] table[x index=0, y index=1] {Pictures/Krstic_off.dat};
		\addplot[smooth,color=orange, line width=0.2mm, dashed] (0,2)--(4,2);
		\end{axis}
		\node[anchor= north west] at (pt) {
			\begin{tikzpicture}
			\begin{axis}[width=0.15\textwidth,height=2.1cm,x axis line style={-stealth},y axis line style={-stealth},xtick={0,0.1,0.2},xticklabels={\tiny{0},\tiny{0.1},\tiny{0.2}},ytick={-2,-1,0},yticklabels={\tiny{-2},\tiny{-1},\tiny{0}},ymax = 0.5,ymin=-2.5,xmax=0.2,xmin=0,xlabel={\tiny Time [$s$]}, ylabel={\tiny $x$ [/]}, title={\tiny{Zoom}}, title style={yshift=-2.5mm}, xlabel style={yshift=1.5mm},ylabel style={yshift=-1.5mm}]
			\vspace*{-6mm}
			\addplot[smooth,color=blue, line width=0.2mm] table[x index=0, y index=1] {Pictures/Krstic_off_zoom.dat};
			\end{axis}
			\end{tikzpicture}};
	\end{tikzpicture}
	\label{fig:Krstic_intermittent_unstable}}
\subfigure[Trajectory obtained with intermittent measurements: $T_s=1s$, $\epsilon=0.17s$ and $\omega=2002\,\pi\, rad/s$]{
	\begin{tikzpicture}
	\begin{axis}[width=0.4\textwidth,height=4cm,x axis line style={-stealth},y axis line style={-stealth},xtick={0,15,30,45,60},xticklabels={0,15,30,45,60},ytick={-2,0,2},yticklabels={-2,0,$x^*$}, ymax = 3,ymin=-2.5,xmax=60,xmin=0,xlabel={Time [$s$]},	ylabel={$x$ [$/$]}]
	\addplot[smooth,color=blue, fill=blue,line width=0.2mm] table[x index=0, y index=1] {Pictures/Krstic_off_4.dat};
	\addplot[smooth,color=orange, line width=0.2mm, dashed] (0,2)--(60,2);
	\end{axis}
	\end{tikzpicture}
	\label{fig:Krstic_intermittent_ok}}
\caption{Comparison of the trajectories obtained when implementing system \eqref{eq:classical_source} with $h(x)=(x-2)^2+10$, $f_1(h(x)\!)=2\rho h(x)$, $f_2(h(x)\!)=1$, $u_1(t)=\cos(t)$, $u_2(t)=\sin(t)$, $k_1=k_2=1$, $\rho=0.25s^{-1}$, $t_0=0s$, and $x(t_0)=-1$, with continuous (i.e. $\epsilon=T_s$) and intermittent measurements.}
\label{fig:comp_Krstic}
\end{figure}

Those simulation results suggest thus that, for all $\epsilon\in(0,T_s]$, it is still possible to make the extremum seeking system \eqref{eq:classical_source} converge to an (arbitrarily small) neighborhood of the cost minimizer, by selecting a sufficiently large dithers frequency. The following theorem formalizes this suggestion:

\begin{theorem}
\label{lem:classical_source}
Under Assumptions \ref{as:dithers} and \ref{as:cost}-\ref{as:vector_fields_gen}, the point $x^*$ is sGPUAS for system \eqref{eq:classical_source}, with the parameter $\omega^{-1}$.
\end{theorem}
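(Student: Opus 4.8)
The plan is to show that, on every compact time interval and uniformly in the initial time, the trajectories of \eqref{eq:classical_source} stay arbitrarily close (for $\omega$ large enough) to those of the $T_s$-periodic \emph{switched gradient flow}
\[
\dot{\overline{x}}(t)=-\rho\,\chi(t)\,\nabla h(\overline{x}(t)),\qquad
\chi(t):=\begin{cases}1 & \modi{t,T_s}\in[0,\epsilon)\\[2pt] 0 & \modi{t,T_s}\in[\epsilon,T_s),\end{cases}
\]
and then to conclude by combining the global asymptotic stability of $x^*$ for this limiting system with the standard trajectory/Lyapunov argument used in the proof of Lemma~\ref{lem:approx_traj}.

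First I would exploit that $\chi$ is $\{0,1\}$-valued, so that $f_i(h_m(t,x))=\chi(t)f_i(h(x))+(1-\chi(t))f_i(0)$, and \eqref{eq:classical_source} splits into an ``on-phase'' part and an ``off-phase'' forcing $d_\omega(t):=\sqrt{\omega}\sum_i(1-\chi(t))f_i(0)u_i(k_i\omega t)$. On any off-interval $[nT_s+\epsilon,(n+1)T_s]$ the vector fields $f_i(0)$ are constant, so the solution is explicit, and since each $u_i$ is $2\pi$-periodic with zero mean (Assumption~\ref{as:dithers}) its antiderivative is bounded; hence $\norm{x(t)-x(nT_s+\epsilon)}\leq C\,\omega^{-1/2}$ for all $t$ in that off-interval, with $C$ independent of $n$. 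On any on-interval $[nT_s,nT_s+\epsilon]$ the dynamics coincide with the classical scheme \eqref{eq:classical_ES}, whose Lie-bracket system is $\dot{\overline{x}}=-\rho\nabla h(\overline{x})$ by Assumption~\ref{as:vector_fields_gen} (cf.\ the Remark); applying the closeness-of-trajectories estimate (Theorem~3.1.1 in \cite{Dur_thesis}) on this interval of fixed length $\epsilon$ — which is legitimate even though $\epsilon$ need not be a multiple of the dither period, since that estimate holds on arbitrary compact intervals — yields $\norm{x(nT_s+\epsilon)-\Phi_\epsilon(x(nT_s))}\leq\alpha(\omega)$ with $\alpha(\omega)\to 0$ as $\omega\to\infty$, uniformly over $n$ as long as the trajectory remains in a fixed compact set, where $\Phi_\tau$ denotes the time-$\tau$ flow of $-\rho\nabla h$.

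Combining the two estimates, the sampled sequence $x_n:=x(nT_s)$ satisfies $x_{n+1}=\Phi_\epsilon(x_n)+e_n$ with $\norm{e_n}\leq\eta(\omega):=\alpha(\omega)+C\,\omega^{-1/2}\to 0$, and since $\Phi_\epsilon^{\,n}$ is exactly the time-$nT_s$ flow of the switched limiting system (which is frozen on off-phases), a discrete Gronwall bound (using that $\Phi_\epsilon$ is Lipschitz on compacts) gives $\norm{x_n-\overline{x}(nT_s)}\leq K(N)\,\eta(\omega)$ for all $n\leq N$, where $N$ is the fixed number of periods in the horizon of interest; a second use of the same estimates within each period controls the deviation between sampling instants. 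Thus, on every compact horizon, $\sup_t\norm{x(t)-\overline{x}(t)}\to 0$ as $\omega\to\infty$, uniformly in $t_0$. Next, $x^*$ is globally asymptotically stable for the limiting switched system, uniformly in the initial time: with $V(\overline{x})=\tfrac12\norm{\overline{x}-x^*}^2$ one has $\dot V=-\rho\,\chi(t)\,\nabla^{T}\!h(\overline{x})(\overline{x}-x^*)\leq 0$ by Assumption~\ref{as:cost}, hence boundedness, and since $\nabla^{T}\!h(\overline{x})(\overline{x}-x^*)$ is bounded below by a positive constant on each closed annulus centered at $x^*$ while every window of length $T_s$ contains on-phases of total length $\epsilon$, $V$ decreases by a fixed positive amount per period as long as $\overline{x}$ stays outside a prescribed ball, which it then never leaves. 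With GAS of the limiting system and the closeness estimate in hand, the three conditions of Definition~\ref{def:sgpuas} (with the single tuning parameter $\omega^{-1}$) follow by exactly the bootstrapping argument in the proof of Lemma~\ref{lem:approx_traj}/\cite{Dur_thesis}, the only difference being that the limiting system is $T_s$-periodic rather than autonomous, which is harmless because its GAS is uniform in $t_0$.

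I expect the main obstacle to be the closeness step, i.e.\ controlling the effect of the intermittency: one must verify that the $\chi$-modulation contributes \emph{nothing} at the Lie-bracket level (the off-phase vector fields are constant, so their brackets vanish) and only an $O(\omega^{-1/2})$ per-period defect, and that these defects do not accumulate uncontrollably over a fixed horizon — which forces the ``sample at $nT_s$ and shadow the discrete gradient flow'' bookkeeping, together with the customary ``as long as the trajectory stays in a compact set'' argument to close the loop between closeness and boundedness.
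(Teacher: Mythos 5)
Your proposal is correct, but it reaches the theorem by a genuinely different route than the paper. You identify the $T_s$-periodic switched gradient flow $\dot{\overline{x}}=-\rho\,\chi(t)\nabla h(\overline{x})$ as the limit system, prove closeness on compact horizons by splitting each period into an on-interval (where the closeness-of-trajectories estimate for \eqref{eq:classical_ES} applies) and an off-interval (where the explicit solution moves by $O(\omega^{-1/2})$ thanks to the zero-mean dithers), and then invoke the Moreau--Aeyels-type template ``closeness on compact horizons plus uniform GAS of the limit implies sGPUAS''. The paper never introduces a limit system for the intermittent dynamics: it works at the level of single dither periods, importing from \cite{Lab22} a one-period displacement bound and a one-period Lyapunov decrease of the form $\norm{x(t_0+T)-x^*}^2\leq\norm{x_0-x^*}^2-2T\left[\rho\nabla^T h(x_0)(x_0-x^*)-M_V\omega^{-0.5}\right]$, combining it with the same explicit off-phase bound you derive, and then verifying the three conditions of Definition~\ref{def:sgpuas} directly by a recursive/contradiction argument over transmission periods (with the possibly incomplete first period treated separately). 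Your route is more modular and makes the role of the intermittency transparent; its cost is that you must (i) justify applying the closeness theorem on an on-interval of length $\epsilon$ that is not a multiple of the dither period and with arbitrary initial dither phase --- which does hold, since those estimates are uniform in the initial time and the incomplete period contributes only $O(\omega^{-1/2})$, exactly the phenomenon the paper discusses around Figure~2 --- and (ii) use a version of Lemma~\ref{lem:approx_traj} valid for a time-periodic rather than autonomous limit system, which you correctly flag and which is legitimate because the GAS of the switched flow is uniform in $t_0$ (every window of length $T_s$ contains on-time of measure $\epsilon$). The paper's per-period Lyapunov bookkeeping avoids both of these technical appeals at the price of a longer, more hands-on verification.
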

\begin{proof}
The proof is reported in Appendix \ref{ap:proof_classical_source}.\hfill $\blacksquare$
\end{proof}

Interestingly, in both the cases of intermittent and continuous measurements, the point $x^*$ is sGPUAS for the extremum seeking system, with the parameter $\omega^{-1}$. There may be, however, two main drawbacks in feeding the extremum seeking system \eqref{eq:classical_ES} with intermittent measurements (i.e. in implementing \eqref{eq:classical_source}). First, the convergence time of system \eqref{eq:classical_source} is typically larger than the one of system \eqref{eq:classical_ES}. There are indeed $T_s-\epsilon$ seconds that are somehow lost every $T_s$ seconds, since the state follows time-periodic signals with zero-mean and an arbitrary amplitude (i.e. $f_i(0)$) during that period. This may be noticed by comparing Figures \ref{fig:Krstic_intermittent} and \ref{fig:Krstic_intermittent_ok} with Figure \ref{fig:Krstic_continuous}. Second, the nominal dithers frequency to achieve a given steady-state accuracy with system \eqref{eq:classical_source} may be (much) larger than the one needed with system \eqref{eq:classical_ES}. It indeed follows from Figure \ref{fig:Krstic_intermittent_ok} that the nominal dithers frequency to reach a steady-state error smaller than 0.6 is larger than $2002\,\pi$ rad/s, while the result of Figure \ref{fig:Krstic_continuous} (together with other simulations done) suggests that this nominal dithers frequency is less than $20\,\pi$ rad/s for system \eqref{eq:classical_ES}. 

We are now going to adapt system \eqref{eq:classical_source} so as to tackle, or at least reduce, those two drawbacks.

\subsection{A First Modification}
The first change we make on system \eqref{eq:classical_source} ensures that its path is the same as the path of system \eqref{eq:classical_ES} with a time-shift of its initial condition. Accordingly, if a dithers frequency gives satisfying results, in terms of steady-state accuracy and boundedness, with system \eqref{eq:classical_ES}, uniformly in time, then holding it with our modification of system \eqref{eq:classical_source} yields the same results.

The idea is to freeze both the state of system \eqref{eq:classical_source} and the dithers when there is no cost measurement available. This yields the following extremum seeking system
\begin{equation}
\dot{x}(t)\!=\!\left\lbrace\begin{matrix} 0 &\!\!\!\!\!\!\text{if } h_m(t,x(t)\!)\!=\! 0\!\!\!\\
\sum_{i=1}^{l} \sqrt{\omega} f_i(h_m(t,x(t)\!)\!) u_i(k_i \omega \tau(t)\!)  & \text{ else,}
\end{matrix}\right.
\label{eq:modification_ES_intermittent_mod}
\end{equation}
with $\tau(t)=\int_{0}^{t} \sign(h_m(\theta,x(\theta)\!)\!)^2\, d\theta+\tau_0$, and where $\tau_0\in\mathbb{R}$. 

The variable $\tau$ in \eqref{eq:modification_ES_intermittent_mod} represents thus the auxiliary time-variable allowing to halt the dithers when there is no cost measurement. Note that this freezing of the dithers is fundamental to not loose the "averaging" property. 

In terms of practical applications, system \eqref{eq:modification_ES_intermittent_mod} may simply be implemented with $\tau(t)=\int_{t_0}^{t} \sign(h_m(\theta,x(\theta)\!)\!)^2\, d\theta$, i.e. by selecting $\tau_0=-\int_{0}^{t_0} \sign(h_m(\theta,x(\theta)\!)\!)^2\, d\theta$.

Let $t_0\in\mathbb{R}$ and $x_0\in\mathbb{R}^n$. Assuming that $h_m(t,x(t)\!)=0$ if and only if $\modi{t,T_s}\geq \epsilon$, the path of system \eqref{eq:modification_ES_intermittent_mod}, through $x(t_0)=x_0$, is the same as the one of system \eqref{eq:classical_ES}, through $x_c(\tau(t_0)\!)=x_0$. This implies that both the steady-state accuracy and the set in which the trajectory remains bounded are the same for the two systems. The only difference is that the convergence time of system \eqref{eq:modification_ES_intermittent_mod} is longer, since $x$ is only updated $\epsilon$ seconds over $T_s$ seconds. We may thus prove the following stability properties for system \eqref{eq:modification_ES_intermittent_mod}:
\begin{theorem}
\label{lem:stop}
Suppose that $h(x)\neq0$, for all $x\in\mathbb{R}^n$. Then, under Assumptions \ref{as:dithers}, \ref{as:cost}, and \ref{as:vector_fields_gen}, the point $x^*$ is semi-globally practically asymptotically stable for system \eqref{eq:modification_ES_intermittent_mod}, with the parameter $\omega^{-1}$, uniformly in $t_0$ and $\tau_0$.
\end{theorem}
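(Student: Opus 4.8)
The plan is to turn system \eqref{eq:modification_ES_intermittent_mod} into a pure time-reparametrisation of the classical extremum seeking system \eqref{eq:classical_ES}, as anticipated in the paragraph preceding the statement. The crucial role of the hypothesis $h(x)\neq 0$ is that the auxiliary time $\tau$ then no longer depends on the trajectory: since $h_m(\theta,x(\theta))$ equals $h(x(\theta))\neq 0$ whenever $\modi{\theta,T_s}\in[0,\epsilon)$ and equals $0$ otherwise, one has $\sign(h_m(\theta,x(\theta)))^2=\mathbf{1}\{\modi{\theta,T_s}\in[0,\epsilon)\}$, so that $\tau(t)=\tau_0+\int_0^t \mathbf{1}\{\modi{\theta,T_s}\in[0,\epsilon)\}\,d\theta$ is an explicit, continuous, non-decreasing, $1$-Lipschitz function with $\dot\tau(t)\in\{0,1\}$ for a.e.\ $t$. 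The first step is to record its elementary features: since $\epsilon\leq T_s$ it satisfies $\tau(t+T_s)=\tau(t)+\epsilon$ for all $t$, hence $\tau(t)\geq\tau(t_0)+\lfloor (t-t_0)/T_s\rfloor\,\epsilon$ for $t\geq t_0$ and $\tau(t)\to+\infty$ as $t\to\infty$. With $\tau$ a known function of $t$, \eqref{eq:modification_ES_intermittent_mod} becomes an ordinary non-autonomous (Carath\'eodory) differential equation in $x$ alone, whose right-hand side is measurable in $t$ and locally Lipschitz in $x$ uniformly in $t$ (as $f_i\circ h\in C^1$ and the $u_i$ are bounded), so its solutions are unique.

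The second step is the reparametrisation identity. Fix $t_0$, $\tau_0$, $x_0$, set $\sigma_0:=\tau(t_0)$ (a number determined only by $t_0,\tau_0,T_s,\epsilon$), and let $x_c(\cdot)$ be the solution of \eqref{eq:classical_ES} with $x_c(\sigma_0)=x_0$ on its maximal forward interval. I claim the solution of \eqref{eq:modification_ES_intermittent_mod} through $x(t_0)=x_0$ is $x(t)=x_c(\tau(t))$. On any interval where $\modi{t,T_s}\in[0,\epsilon)$ one has $\dot\tau=1$ and $h_m(t,x_c(\tau(t)))=h(x_c(\tau(t)))$, so the chain rule (legitimate a.e., $\tau$ being Lipschitz and $x_c$ absolutely continuous) gives $\tfrac{d}{dt}x_c(\tau(t))=\sqrt{\omega}\sum_i f_i(h(x_c(\tau(t))))u_i(k_i\omega\tau(t))$, which is the ``else'' branch of \eqref{eq:modification_ES_intermittent_mod}; on any interval where $\modi{t,T_s}\in[\epsilon,T_s)$ one has $\dot\tau=0$, so $x_c(\tau(t))$ is frozen while $h_m(t,x_c(\tau(t)))=0$, again matching \eqref{eq:modification_ES_intermittent_mod}. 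Hence $t\mapsto x_c(\tau(t))$ is absolutely continuous, satisfies \eqref{eq:modification_ES_intermittent_mod} almost everywhere and has the prescribed initial value, so by uniqueness it is the solution.

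The third step transfers the stability certificate. Combining Lemma \ref{lem:approx_traj} with Assumption \ref{as:vector_fields_gen} --- which turns the Lie-bracket system \eqref{eq:def_lie_bracket} associated with \eqref{eq:classical_ES} into the gradient flow $\dot{\overline x}=-\rho\nabla h(\overline x)$ --- and with Assumption \ref{as:cost} --- under which $x^*$ is globally asymptotically stable for that flow --- the point $x^*$ is sGPUAS for \eqref{eq:classical_ES} with parameter $\omega^{-1}$, and by Definition \ref{def:sgpuas} the quantities $\delta_Q$, $\delta_W$, $\omega^*$ and the convergence time $t_1^c$ it provides are uniform in the initial instant. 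Given $\delta_B,\delta_V>0$ (with $\delta_V$ taken as large as the semi-global statement requires), select such $\delta_Q,\delta_W,\omega^*,t_1^c$. For $x_0\in U_{\mathbb{R}^n}^{x^*}(\delta_V)$, condition~1 for \eqref{eq:classical_ES} confines $x_c$ to the bounded set $U_{\mathbb{R}^n}^{x^*}(\delta_W)$ for all arguments $\geq\sigma_0$, so $x_c$ (hence $x(\cdot)=x_c(\tau(\cdot))$) exists on $[t_0,\infty)$, and, using $\tau(t)\geq\sigma_0$ for $t\geq t_0$, one gets $x(t)\in U_{\mathbb{R}^n}^{x^*}(\delta_W)$ for all $t\geq t_0$; this is condition~1 for \eqref{eq:modification_ES_intermittent_mod}, and condition~2 follows in the same way. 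For condition~3, set $t_1:=T_s\big(\lceil t_1^c/\epsilon\rceil+1\big)$, which depends only on $t_1^c$, $T_s$, $\epsilon$; then for every $t\geq t_0+t_1$ the bound $\tau(t)\geq\sigma_0+\lfloor (t-t_0)/T_s\rfloor\epsilon\geq\sigma_0+t_1^c$ combined with condition~3 for \eqref{eq:classical_ES} yields $x(t)=x_c(\tau(t))\in U_{\mathbb{R}^n}^{x^*}(\delta_B)$. As $\delta_Q,\delta_W,\omega^*$ and $t_1$ are independent of $t_0$ and $\tau_0$, this is precisely the claimed property.

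I expect the real obstacle to be conceptual rather than computational: one must make fully rigorous that $h(x)\neq 0$ decouples $\tau$ from $x$, yielding the trajectory-independent, exactly $\epsilon$-per-period object above --- without this the reparametrisation breaks --- and one must check that the constants inherited from \eqref{eq:classical_ES} stay uniform in $t_0$ and $\tau_0$, which relies on the uniformity in the initial instant already present in Definition \ref{def:sgpuas}. The rest is routine: the measure-theoretic justification of the chain rule across the countably many switching times, and the remark that the ``semi-global'' freedom in $\delta_V$ accommodates any prescribed set of initial conditions (this is also where the $\propto T_s/\epsilon$ inflation of the convergence time originates).
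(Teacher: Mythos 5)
Your proposal is correct and follows essentially the same route as the paper's proof: first establish sGPUAS of the classical system \eqref{eq:classical_ES} via Lemma \ref{lem:approx_traj} and the gradient-flow Lie-bracket system, then use $h(x)\neq 0$ to identify the trajectory of \eqref{eq:modification_ES_intermittent_mod} with the time-reparametrisation $x_c(\tau(t))$ of \eqref{eq:classical_ES} and transfer the three conditions of Definition \ref{def:sgpuas}, with the convergence time inflated by a factor of order $T_s/\epsilon$. Your write-up is in fact more explicit than the paper's (the trajectory-independence of $\tau$, the identity $\tau(t+T_s)=\tau(t)+\epsilon$, and the uniqueness argument for the reparametrised solution are only implicit there), but the underlying argument is the same.
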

\begin{proof}
The proof is given in Appendix \ref{ap:proof_stop}. \hfill $\blacksquare$
\end{proof}

\begin{remark}
The state of system \eqref{eq:modification_ES_intermittent_mod} is frozen as long as $h_m(t,x(t)\!)=0$. To establish its practical convergence to the cost minimizer, one must thus impose that $h_m(t,x(t)\!)=0$ if and only if there is no cost measurement. Accordingly, since we aim at semi-global results, Theorem \ref{lem:stop} requires that $h(x)\neq 0$, for all $x\in\mathbb{R}^n$. This may seem rather restrictive. However, this assumption is often met in practical applications. In source seeking, for instance, the cost typically corresponds to a scalar field that is strictly positive (e.g. a chemical concentration, a light intensity, a noise intensity, etc.), or that may be made strictly positive by performing a change of scale (e.g. a temperature converted in Kelvin). It is worth to mention that this assumption is also satisfied in the application context of search and rescue of avalanche victims that motivated this work. The cost is indeed the intensity of the electromagnetic field emitted by the avalanche beacon, that is always strictly positive.
\end{remark}

To illustrate the benefit of implementing system \eqref{eq:modification_ES_intermittent_mod}, instead of system \eqref{eq:classical_source}, we consider again the case study that was introduced in Section \ref{sec:basic_scheme}. In Figure \ref{fig:Krstic_wait_1}, one may observe that, for $\omega=20\,\pi$ rad/s and $\epsilon=0.17s$, the trajectory of system \eqref{eq:modification_ES_intermittent_mod} converges to the same set as the one of system \eqref{eq:classical_ES} (cf. Figure \ref{fig:Krstic_continuous}). Namely, the steady-state accuracy is the same as the one obtained with continuous cost measurements. This contrasts with the trajectory of system \eqref{eq:classical_source}, that was diverging (cf. Figure \ref{fig:Krstic_intermittent_unstable}). In Figure \ref{fig:Krstic_wait_2}, one may see that, for $\omega=2002\,\pi$ rad/s and $\epsilon=0.17s$, the steady-state accuracy of system \eqref{eq:modification_ES_intermittent_mod} remains larger than the one of system \eqref{eq:classical_source} (cf. Figure \ref{fig:Krstic_intermittent_ok}). 

\begin{figure}[!ht]
\centering
\subfigure[$\omega=20\,\pi\, rad/s$]{
	\begin{tikzpicture}
	\begin{axis}[width=0.4\textwidth,height=4cm,x axis line style={-stealth},y axis line style={-stealth},xtick={0,20,40,60,80},xticklabels={0,20,40,60,80},ytick={-2,0,2},yticklabels={-2,0,2}, ymax = 3,ymin=-2.5,xmax=80,xmin=0,xlabel={Time [$s$]},	ylabel={$x$ [$/$]}]
	\addplot[smooth,color=blue, fill=blue, line width=0.2mm] table[x index=0, y index=1] {Pictures/Krstic_wait.dat};
	\addplot[smooth,color=orange, line width=0.2mm, dashed] (0,2)--(80,2);
	\end{axis}
	\end{tikzpicture}
	\label{fig:Krstic_wait_1}}
\subfigure[$\omega=2002\,\pi\, rad/s$]{
	\begin{tikzpicture}
	\begin{axis}[width=0.4\textwidth,height=4cm,x axis line style={-stealth},y axis line style={-stealth},xtick={0,20,40,60,80},xticklabels={0,20,40,60,80},ytick={-2,0,2},yticklabels={-2,0,2}, ymax = 3,ymin=-2.5,xmax=80,xmin=0,xlabel={Time [$s$]},	ylabel={$x$ [$/$]}]
	\addplot[smooth,color=blue, fill=blue, line width=0.2mm] table[x index=2, y index=3] {Pictures/Krstic_wait.dat};
	\addplot[smooth,color=orange, line width=0.2mm, dashed] (0,2)--(80,2);
	\end{axis}
	\end{tikzpicture}
	\label{fig:Krstic_wait_2}}
\caption{Trajectories obtained when implementing system \eqref{eq:modification_ES_intermittent_mod} with $h(x)=(x-2)^2+10$, $f_1(h(x)\!)=2\rho h(x)$, $f_2(h(x)\!)=1$, $u_1(t)=\cos(t)$, $u_2(t)=\sin(t)$, $k_1=k_2=1$, $\rho=0.25s^{-1}$, $t_0=0s$, $x(t_0)=-1$, $T_s=1s$, and $\epsilon=0.17s$.}
\label{fig:Krstic_wait}
\end{figure}
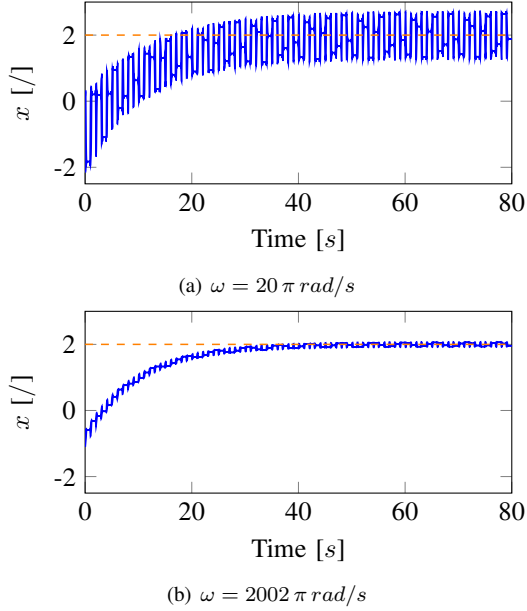

There is still, however, a main drawback to system \eqref{eq:modification_ES_intermittent_mod}. Its convergence time remains (much) larger than the one of system \eqref{eq:classical_ES} (compare Figure \ref{fig:Krstic_wait_1} with Figure \ref{fig:Krstic_continuous}). This comes from the fact that the state is not updated in absence of cost measurements. There are thus $T_s-\epsilon$ seconds lost every $T_s$ seconds. In the next section, we propose a way to avoid this issue.
\subsection{Using a Previous Estimate of the Gradient}
To reduce the convergence time of system \eqref{eq:modification_ES_intermittent_mod}, we have to find a way to steer its state towards the cost optimizer when there is no cost measurement. The idea is to take benefit from the previous cost measurements to get an estimation of the gradient. The state might then be updated along that direction until the cost may be measured again. 

We will now see a possible way to obtain such an estimation of the gradient. We will then show that this estimation may also be used to adapt the dithers amplitude, so as to improve the steady-state accuracy.
\subsubsection{Non-adaptive Dithers Amplitude}
We know that the trajectory of system \eqref{eq:modification_ES_intermittent_mod} approximates the one of the gradient-descent law, on each time-interval where the cost is measured. By considering the average value of its dynamics, on a dithers period, one should thus be able to get an estimation of the gradient. The following lemma formalizes this idea:
\begin{lemma}
	\label{lem:approx_gradient_average}
	Suppose that Assumptions \ref{as:dithers}, \ref{as:cost}, and \ref{as:vector_fields_gen}, hold. Let $C:=2\pi \textbf{LCM}(k_1^{-1},k_2^{-1},...,k_l^{-1})$ and $\epsilon'\in(0,\epsilon]$. Furthermore, let the compact set $\mathcal{W}\subset\mathbb{R}^n$, and the constants  $\omega\in[C/\epsilon',\infty)$, and $k_0\in\mathbb{Z}$, be such that the trajectory of system \eqref{eq:modification_ES_intermittent_mod} satisfies $x(t)\in\mathcal{W}$, for all $t\in[k_0T_s+\epsilon'-T,k_0T_s+\epsilon']$, with $T:=C\omega^{-1}$. Defining
	\begin{equation}
		g(t)=\sqrt{\omega}/T\sum_{i=1}^{l}\int_{t-T}^{t}  f_i(h_m(\theta,x(\theta)\!)\!) u_i(k_i\omega \theta)\, d\theta,
		\label{eq:def_g}
	\end{equation}
	it holds then
	\begin{equation}
	\norm{g\left(k_0T_s+\epsilon'\right)+\rho \nabla h\left(x\left(k_0T_s+\epsilon'\right)\!\right)}\\
	\!<\!l^3MC^{2}\omega^{-0.5},
	\label{eq:bound_approx_gradient_lemma}
	\end{equation}
	with $M\in\mathbb{R}_{>0}$ such that  $\norm{\pazocal{L}_{f_m}\pazocal{L}_{f_j}f_i(h(x)\!)}<M$, for all $x\in\mathcal{W}$ and $i,j,m\in\{1,2,...,l\}$.
\end{lemma}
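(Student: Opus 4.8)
The plan is to run the whole argument on the short window $I:=[k_0T_s+\epsilon'-T,\,k_0T_s+\epsilon']$, on which \eqref{eq:modification_ES_intermittent_mod} is an ordinary smooth input-affine system, and to recognise $g(k_0T_s+\epsilon')$ as (a rescaling of) its one-period Lie-bracket averaging integral, to which the classical expansion behind Theorem~3.1.1 of \cite{Dur_thesis} (and Lemma~3.1 of \cite{Lab18}) applies. The leading term of that expansion will be identified with $-\rho\nabla h$ via Assumption \ref{as:vector_fields_gen}, and all higher-order contributions will be collected into a single third-order remainder controlled by $M$ on $\mathcal{W}$.

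First I would check that $I$ lies inside a measurement window: since $\omega\ge C/\epsilon'$ we have $T=C\omega^{-1}\le\epsilon'\le\epsilon$, hence $\modi{\theta,T_s}\in[\epsilon'-T,\epsilon']\subseteq[0,\epsilon]$ for every $\theta\in I$. By \eqref{eq:def_hm} this gives $h_m(\theta,x(\theta))=h(x(\theta))$ on $I$, the right-hand branch of \eqref{eq:modification_ES_intermittent_mod} is active, $\dot\tau\equiv1$ there (so $\tau$ is affine with unit slope on $I$), and the dynamics on $I$ reads $\dot x=\sqrt\omega\sum_i F_i(x)u_i(k_i\omega\tau)$ with $F_i:=f_i\circ h$ of class $C^2$. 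Moreover, by the definitions of $C$ and $\textbf{LCM}$, the window $I$ contains an integer number of periods of each dither $u_i(k_i\omega\,\cdot)$, since $k_i\omega T=k_iC=2\pi\,k_i\textbf{LCM}(k_1^{-1},\dots,k_l^{-1})\in2\pi\mathbb{N}_{>0}$.

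Writing $t_1:=k_0T_s+\epsilon'$, I would then substitute $x(\theta)=x(t_1)-\int_\theta^{t_1}\dot x(s)\,ds$ together with $F_i(x(\theta))=F_i(x(t_1))+\pazocal{D}F_i(x(t_1))\big(x(\theta)-x(t_1)\big)+r_i(\theta)$ into \eqref{eq:def_g}. The order-zero contribution $\frac{\sqrt\omega}{T}\sum_i F_i(x(t_1))\int_{t_1-T}^{t_1}u_i(k_i\omega\theta)\,d\theta$ vanishes because every $u_i$ has zero mean over an integer number of periods (Assumption \ref{as:dithers}). For the order-one contribution, I would insert $x(\theta)-x(t_1)\approx-\sqrt\omega\sum_j F_j(x(t_1))\int_\theta^{t_1}u_j(k_j\omega\tau(s))\,ds$, exchange the order of integration over the triangle $\{t_1-T\le\theta\le s\le t_1\}$, and use the definition \eqref{eq:def_gamma_ij} of $\gamma_{ij}$; since $\pazocal{D}F_i(x)=\pazocal{D}f_i(h(x))\,\nabla^{T}h(x)$, this collapses to $-\big(\sum_{i,j}\pazocal{D}f_i(h(x(t_1)))\,f_j^{T}(h(x(t_1)))\,\gamma_{ij}\big)\nabla h(x(t_1))$. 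Here $\gamma_{ii}=0$ and $\gamma_{ji}=-\gamma_{ij}$ — which follows from $\gamma_{ij}+\gamma_{ji}=\frac{\omega}{T}\big[\big(\int_0^T u_i(k_i\omega\theta)d\theta\big)\big(\int_0^T u_j(k_j\omega\theta)d\theta\big)\big]=0$ (product rule plus zero mean) — so the bracketed matrix equals $-\sum_{i<j}\big(\pazocal{D}f_j(h(x(t_1)))f_i^{T}(h(x(t_1)))-\pazocal{D}f_i(h(x(t_1)))f_j^{T}(h(x(t_1)))\big)\gamma_{ij}=\rho I_{n\times n}$ by Assumption \ref{as:vector_fields_gen}, and the order-one contribution is exactly $-\rho\nabla h(x(t_1))$.

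What is left is precisely $g(t_1)+\rho\nabla h(x(t_1))$: it gathers the quadratic Taylor remainders $r_i$ and the error committed when replacing $F_j(x(s))$ by $F_j(x(t_1))$ inside $x(\theta)-x(t_1)$, and — when the expansion is organised as in \cite{Dur_thesis} — it reduces to a sum indexed by triples $(i,j,m)\in\{1,\dots,l\}^3$ of nested integrals whose integrands are controlled by the triple Lie-derivative quantity of the statement, hence by $M$ on $\mathcal{W}$ (this is where $x(t)\in\mathcal{W}$ on $I$ is used). With $\sup_t|u_i(t)|\le1$, the prefactor $\sqrt\omega/T=\omega^{3/2}/C$, the deviation estimate $\|x(\theta)-x(t_1)\|=O(\sqrt\omega\,T)=O(C\omega^{-1/2})$, and the elementary triple integral $\int_{t_1-T}^{t_1}\!\int_\theta^{t_1}\!\int_s^{t_1}1\,dr\,ds\,d\theta=T^3/6$, this yields a bound of the form $l^3M\,\omega^{3/2}T^{2}\cdot(\text{const}<1)=l^3MC^{2}\omega^{-1/2}\cdot(\text{const}<1)$, which is \eqref{eq:bound_approx_gradient_lemma}. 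I expect the main obstacle to be exactly this last bookkeeping: carrying the constants so that they come out strictly below $1$ with only the third-order bound $M$ surviving (the lower-order quantities cancelling exactly at orders $0$ and $1$), and ensuring that the affine reparametrisation $\tau(\theta)=\theta+\mathrm{const}$ on $I$ is absorbed so that the order-one integrals genuinely reproduce the $\gamma_{ij}$ of \eqref{eq:def_gamma_ij}; this is the single-period specialisation of the estimate in Theorem~3.1.1 of \cite{Dur_thesis}.
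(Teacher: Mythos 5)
Your proposal is correct and follows essentially the same route as the paper's proof: confirm that $[k_0T_s+\epsilon'-T,k_0T_s+\epsilon']$ lies inside a measurement window so $h_m=h$ there, kill the order-zero term by zero mean over an integer number of dither periods, identify the order-one term with $-\rho\nabla h(x(k_0T_s+\epsilon'))$ via the antisymmetry $\gamma_{ji}=-\gamma_{ij}$ and Assumption \ref{as:vector_fields_gen}, and bound the remaining triple nested integral by $l^3MC^2\omega^{-0.5}$ (the simplex volume $T^3/6$ supplying the constant $<1$). The only presentational difference is that the paper obtains the remainder by two exact, iterated applications of the fundamental theorem of calculus along the flow — so that the third-order integrand is literally $\pazocal{L}_{\tilde f_m}\pazocal{L}_{\tilde f_j}\tilde f_i$ rather than separate Hessian and cross-derivative pieces — which is precisely the reorganisation you flag as necessary at the end.
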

\begin{proof}
	The proof can be found in Appendix \ref{ap:proof_gradient_average}. \hfill $\blacksquare$
\end{proof}

As stated in Lemma \ref{lem:approx_gradient_average}, $g(k_0T_s+\epsilon)$ provides an estimation of $-\rho\nabla h(k_0T_s+\epsilon)$, with an accuracy that may be made as large as desired, by increasing $\omega$. We can thus make the extremum seeking system \eqref{eq:modification_ES_intermittent_mod} approximate a "discrete-time" gradient-descent law when there is no cost measurement. This yields

\begin{equation}
 \dot{x}(t)=\left\lbrace\begin{matrix}\sum_{i=1}^{l}\sqrt{\omega} f_i(h_m(t,x(t)\!)\!) u_i(k_i\omega t)& \\ &\!\!\!\!\!\!\!\!\!\!\!\!\!\!\!\!\!\!\!\!\!\!\!\!\!\!\!\!\!\!\!\!\!\!\!\!\!\!\!\!\!\!\!\!\!\!\!\!\text{ if } \modi{t,T_s}<\floor{\epsilon'/T}T\\
 \rho_2 g\left(\floor{t/T_s}T_s+\floor{\epsilon'/T}T\right) &\\ &\!\!\!\!\!\!\!\!\!\quad\text{ else }
 \end{matrix}\right.,
 \label{eq:solution_ON_mod}
\end{equation}
with $\rho_2\in\mathbb{R}_{>0}$, $\epsilon'\in(0,\epsilon]$, and $g(t)$ defined in \eqref{eq:def_g}.

\begin{remark}
	Note that system \eqref{eq:solution_ON_mod} only follows the dynamics of \eqref{eq:classical_source} on the time-interval $[kT_s,kT_s+\floor{\epsilon'/T}T]$, while system \eqref{eq:modification_ES_intermittent_mod} follows it on the time-interval $[kT_s,kT_s+\epsilon]$. We indeed saw in Figure \ref{fig:Krstic_intermittent_unstable} that having an incomplete dithers period may destroy (part of) what has been done during the complete dithers periods. Since $\epsilon$ is  unknown, we assumed that an estimation $\epsilon'\in(0,\epsilon]$ of $\epsilon$ was available, so as to be able to work on complete dithers period. Having such an estimation at hand seems reasonable. It is for instance the case in the search and rescue application mentioned in the introduction. If this estimation is, however, not available, we know from the proof of Theorem \ref{lem:classical_source} that, by selecting a sufficiently large dithers frequency, it is still possible to make the effect of the incomplete dithers period as small as desired. It would thus be possible to make system \eqref{eq:solution_ON_mod} follow the dynamics of \eqref{eq:classical_source} on the time-interval $[kT_s,kT_s+\epsilon]$. However, this would be less efficient.
\end{remark}
 
For $\rho_2$ and $\omega^{-1}$ sufficiently small, $x$ is thus expected to converge in a neighborhood of $x^*$. This intuition is formalized in the following theorem:
\begin{theorem}
	\label{lem:stability_gradient_average}
	Under Assumptions \ref{as:dithers}, \ref{as:cost} and \ref{as:vector_fields_gen}, the point $x^*$ is sGPUAS for system \eqref{eq:solution_ON_mod}, with the vector of parameters $[\omega^{-1},\rho_2]$.
\end{theorem}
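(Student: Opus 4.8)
The plan is to sample the state of \eqref{eq:solution_ON_mod} at the instants $kT_s$, $k\in\mathbb{N}$, to recognize the induced discrete-time system as a vanishing perturbation of the $\tau$-time map of the gradient-descent flow $\dot{\overline{x}}=-\rho\nabla h(\overline{x})$ for a step $\tau$ bounded away from $0$, and finally to transfer stability back to continuous time through an inter-sample estimate. The structural fact I would exploit is that, by Assumption~\ref{as:cost}, $x^*$ is globally asymptotically stable for that flow, with the radially unbounded strict Lyapunov function $V(x)=\tfrac12\|x-x^*\|^2$; in particular the flow $\Phi$ is complete and $s\mapsto\|\Phi_s(z)-x^*\|$ is nonincreasing.

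First I would fix $\delta_V$, take $x(t_0)\in U_{\mathbb{R}^n}^{x^*}(\delta_V)$, and show by a bootstrapping argument --- closed at the very end using the inter-sample bound below --- that for $\omega^{-1}$ and $\rho_2$ below thresholds the trajectory stays in a fixed compact set $\mathcal{W}$, on which all Lipschitz constants, the constant $M$ of Lemma~\ref{lem:approx_gradient_average} and $M_1:=\max_{\mathcal{W}}\|\nabla h\|$ are uniform. Writing $N:=\floor{\epsilon'/T}\,T$ and $\tau_{\mathrm{off}}:=T_s-N$, one has $N\in[\epsilon'/2,\epsilon']$ and $\tau_{\mathrm{off}}\in[0,T_s]$ for $\omega\ge 2C/\epsilon'$, and since $N\le\epsilon'\le\epsilon$ the cost is genuinely measured on $[kT_s,kT_s+N]$, so there \eqref{eq:solution_ON_mod} coincides with \eqref{eq:classical_ES}, whose Lie-bracket system is, by Assumption~\ref{as:vector_fields_gen} and the remark following it, exactly $\dot{\overline{x}}=-\rho\nabla h(\overline{x})$. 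The finite-horizon Lie-bracket trajectory approximation behind Lemma~\ref{lem:approx_traj} (Theorem 3.1.1 in \cite{Dur_thesis}) then gives $x(kT_s+N)=\Phi_N(x(kT_s))+O(\omega^{-1/2})$, and on $[kT_s+N,(k+1)T_s)$ the dynamics reduce to $\dot x\equiv\rho_2 g(kT_s+N)$ with, by Lemma~\ref{lem:approx_gradient_average} applied with $N$ in place of $\epsilon'$, $g(kT_s+N)=-\rho\nabla h(x(kT_s+N))+O(\omega^{-1/2})$. Integrating the off-interval and Taylor-expanding the flow, $\Phi_{\delta t}(y)=y-\rho\,\delta t\,\nabla h(y)+O(\delta t^{2})$, this composes to
\[
x\big((k+1)T_s\big)=\Phi_{\,N+\rho_2\tau_{\mathrm{off}}}\big(x(kT_s)\big)+O(\omega^{-1/2})+O(\rho_2^{2}).
\]

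It then remains to note that the nominal recursion $z_{k+1}=\Phi_\tau(z_k)$ with $\tau:=N+\rho_2\tau_{\mathrm{off}}\in[\epsilon'/2,T_s]$ has $x^*$ globally asymptotically stable --- its orbit is $\{\Phi_{\,k\tau}(z_0)\}_k$ --- and that $V$ decreases along it by at least a fixed amount on each compact annulus $\{r\le\|z-x^*\|\le\delta_V\}\subset\mathcal{W}$, this amount depending only on $r$, $\delta_V$ and the fixed $\epsilon'$, hence not on $\omega$ (above the threshold) or on $\rho_2$. A routine computation then shows that the perturbed recursion still decreases $V$ on such an annulus once $E:=O(\omega^{-1/2})+O(\rho_2^{2})$ is small, and increases $V$ by at most $O(E)$ inside $U_{\mathbb{R}^n}^{x^*}(r)$; choosing first $\omega$ large and then $\rho_2$ small, with $r=\delta_B/2$, makes $E$ dominated by the (already fixed) annulus margin, giving sGPUAS of the $T_s$-sampled system with parameters $[\omega^{-1},\rho_2]$, uniformly in $t_0$. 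Finally, for $t\in[kT_s,kT_s+N]$ trajectory closeness and nonexpansiveness of $\Phi$ yield $\|x(t)-x^*\|\le\|x(kT_s)-x^*\|+O(\omega^{-1/2})$, while for $t\in[kT_s+N,(k+1)T_s)$ one has $\|x(t)-x(kT_s+N)\|\le\rho_2\tau_{\mathrm{off}}\|g(kT_s+N)\|=O(\rho_2)$, so $\sup_{t\in[kT_s,(k+1)T_s)}\|x(t)-x^*\|\le\|x(kT_s)-x^*\|+O(\omega^{-1/2})+O(\rho_2)$; the three conditions of Definition~\ref{def:sgpuas} for $x(\cdot)$ then follow from their sampled versions, which also closes the bootstrap fixing $\mathcal{W}$.

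The hard part will be the first estimate, $x(kT_s+N)=\Phi_N(x(kT_s))+O(\omega^{-1/2})$ \emph{uniformly in $k$}: since the dithers in \eqref{eq:solution_ON_mod} run on real time, the $k$-th on-interval starts with $u_i$ at phase $k_i\omega kT_s\neq0$, so one must check --- as in the proofs of Theorem~\ref{lem:classical_source} and Lemma~\ref{lem:approx_gradient_average} --- that the coefficients $\gamma_{ij}$ of \eqref{eq:def_gamma_ij}, and hence both the nominal flow and $g$, are invariant under this common time shift, which is precisely where the paired-dither structure behind Assumption~\ref{as:vector_fields_gen} (cf. \cite{Gru18}) enters. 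A secondary technicality is respecting the prescribed nesting $[\omega^{-1},\rho_2]$ of the parameters in Definition~\ref{def:sgpuas}, which forces the nominal recursion to be analyzed with $\omega$- and $\rho_2$-independent stability margins.
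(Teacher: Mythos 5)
Your outline is sound and would prove the theorem, but it is organized quite differently from the paper's proof. You compress each transmission period into a single map and exhibit the $T_s$-sampled system as an $O(\omega^{-1/2})+O(\rho_2^2)$ perturbation of $z\mapsto\Phi_{N+\rho_2\tau_{\mathrm{off}}}(z)$, the exact time-$\tau$ map of $\dot{\overline{x}}=-\rho\nabla h(\overline{x})$ with $\tau\geq\epsilon'/2$; stability then follows from a uniform $V$-decrease of $\Phi_\tau$ on compact annuli plus a perturbation argument, and an inter-sample estimate carries it back to continuous time. The paper never forms this flow map: on the on-interval it chains Lyapunov inequalities at the finer scale of individual dither periods $T$ (the estimates \eqref{eq:traj_mod_lb}--\eqref{eq:lya_mod}, imported from \cite{Lab22}), on the off-interval it integrates the constant field $\rho_2 g$ explicitly and bounds the resulting quadratic in $\rho_2(t-k_0T_s-k_TT)$ (see \eqref{eq:average_decrease_nd_lie}), and the three conditions of Definition \ref{def:sgpuas} are verified one by one with nested radii. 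Your route buys a cleaner conceptual picture (a perturbed discrete gradient flow) and makes the $\omega$- and $\rho_2$-independence of the decrease margin transparent; the paper's route avoids invoking finite-horizon trajectory closeness over the whole on-interval and yields explicit thresholds $\omega_k^*$, $\rho_k^*(\omega)$. Two points you flag or gloss over do need the care the paper gives them: (i) the invariance of the averaging under the dither phase $k_i\omega kT_s$ is exactly the identity $\int_{k_0T_s+\epsilon'-T}^{\theta}v_j\,d\sigma=\int_{k_0T_s+\epsilon'}^{\theta}v_j\,d\sigma$ used in the proof of Lemma \ref{lem:approx_gradient_average}, so $\gamma_{ij}$, and hence both $\Phi$ and $g$, are window-independent and your ``hard part'' closes; (ii) an arbitrary $t_0$ may fall mid-period, and during the off-interval the right-hand side of \eqref{eq:solution_ON_mod} depends on the trajectory over $[kT_s+k_TT-T,\,kT_s+k_TT]$, so the first (partial) period must be handled separately with initial data given as a trajectory segment, as the paper does via statements of the form ``if $x(t)\in U_{\mathbb{R}^n}^{x^*}(\delta_V)$ for all $t\in[t_0-T,t_0]$''. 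Neither point is a gap in substance, but both must appear in a complete write-up, as must the explicit closure of your boundedness bootstrap.
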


\begin{proof}
	The proof can be found in Appendix \ref{ap:proof_stability_gradient_average}. \hfill $\blacksquare$
\end{proof}

\begin{remark}
	\label{rem:implementation}
A few comments have to be done regarding the practical implementation of system \eqref{eq:solution_ON_mod}. First, since $T_s$ is unknown, the switching condition cannot be implemented as such. One would thus proceed in the equivalent way. As soon as the cost measurement becomes available (i.e. when $h_m(t,x(t)\!)$ switches from a zero value to a non-zero one), one implements the first case of \eqref{eq:solution_ON_mod}. After $\floor{\epsilon'/T}T$ seconds, one switches to the second case. The initialization of the algorithm also requires some care. It is indeed better to wait until $h_m(t,x(t)\!)$ switches from a zero value to a non-zero one to start the algorithm. Indeed, even if $h_m(t,x(t)\!)\neq 0$ when starting the algorithm, there is no way to know whether there will be at least a dithers period before the end of the measurement. One has thus not the guarantee of being able to get an estimation of the gradient when the measurement stops.  
\end{remark}

To illustrate the benefits of the proposed modification, we consider again our case study, introduced in Section \ref{sec:basic_scheme}, with $\epsilon'=0.1s$. In Figure \ref{fig:Krstic_grad_1}, one may observe that the convergence time obtained with system \eqref{eq:solution_ON_mod} is about eight times smaller than the one obtained with system \eqref{eq:modification_ES_intermittent_mod} (cf. Figure \ref{fig:Krstic_wait_1}), for the same steady-state error. Considering again Figure \ref{fig:Krstic_continuous}, one may see that this convergence time is similar to the one obtained with continuous measurements. It is also worth to mention that system \eqref{eq:solution_ON_mod} may also be used with other vector fields. One may for instance select $f_1(h(x)\!)=\sqrt{2\rho}\cos(h(x)\!)$ and $f_2(h(x)\!)=-\sqrt{2\rho} \sin(h(x)\!)$, as proposed in \cite{Sch14} for continuous measurements. This leads to the results presented in Figure \ref{fig:Scheinker_grad}. It may be observed that, for given values of $\omega$, $\rho$, and $\rho_2$, the amplitude of the oscillations are much smaller, yielding to a smaller steady-state error. One may also see in Figure \ref{fig:Scheinker_grad} that increasing $\rho_2$ reduces the convergence time. Note, however, that above a given threshold, increasing further $\rho_2$ may lead to overshoots and destabilize the system.

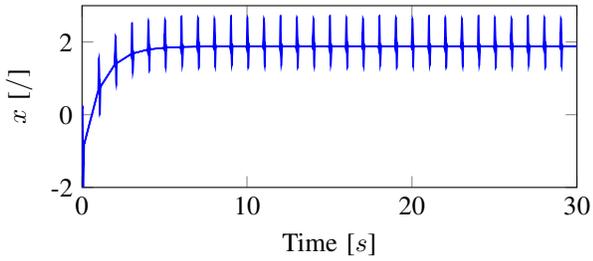
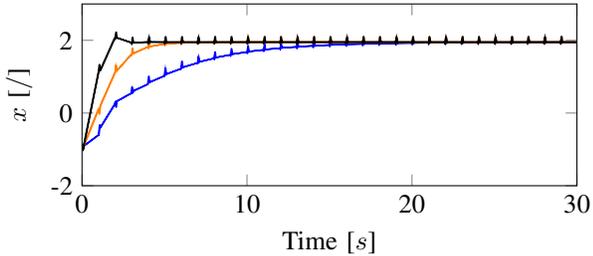
\begin{figure}[!ht]
\centering
\subfigure[$f_1(h(x)\!)=2\rho h(x)$, $f_2(h(x)\!)=1$ and $\rho_2=1.5\rho$ (\full{blue})]{
	\begin{tikzpicture}
	\begin{axis}[width=0.45\textwidth,height=4cm,x axis line style={-stealth},y axis line style={-stealth},xtick={0,10,20,30},xticklabels={0,10,20,30},ytick={-2,0,2},yticklabels={-2,0,2}, ymax = 3,ymin=-2,xmax=30,xmin=0,xlabel={Time [$s$]},	ylabel={$x$ [$/$]}]
	\addplot[smooth,color=blue, fill=blue, line width=0.2mm] table[x index=2, y index=3] {Pictures/Krstic_grad.dat};
	\end{axis}
	\end{tikzpicture}
	\label{fig:Krstic_grad_1}}
\hfill
\subfigure[$f_1(h(x)\!)=\sqrt{2\rho} \cos(h(x)\!)$ and $f_2(h(x)\!)=-\sqrt{2\rho} \sin(h(x)\!)$: $\rho_2=1.5\rho$ (\full{blue}), $\rho_2=5\rho$ (\full{orange}), and $\rho_2=10\rho$ (\full{black})]{
	\begin{tikzpicture}
	\begin{axis}[width=0.45\textwidth,height=4cm,x axis line style={-stealth},y axis line style={-stealth},xtick={0,10,20,30},xticklabels={0,10,20,30},ytick={-2,0,2},yticklabels={-2,0,2}, ymax = 3,ymin=-2,xmax=30,xmin=0,xlabel={Time [$s$]},	ylabel={$x$ [$/$]}]
	\addplot[smooth,color=blue, fill=blue, line width=0.2mm] table[x index=0, y index=1] {Pictures/Scheinker_grad.dat};
	\addplot[smooth,color=orange, fill=orange, line width=0.2mm] table[x index=2, y index=3] {Pictures/Scheinker_grad.dat};
	\addplot[smooth,color=black, fill=black, line width=0.2mm] table[x index=4, y index=5] {Pictures/Scheinker_grad.dat};
	\end{axis}
	\end{tikzpicture}
	\label{fig:Scheinker_grad}}
\caption{Comparison of the trajectories obtained when implementing system \eqref{eq:solution_ON_mod} for two choices of vector fields, and with $h(x)=(x-2)^2+10$, $\rho_2=1.5\rho$, $u_1(t)=\cos(t)$, $u_2(t)=\sin(t)$, $k_1=k_2=1$, $\rho=0.25s^{-1}$, $t_0=0s$, $x(t_0)=-1$, $T_s=1s$, $\epsilon=0.17s$, and $\omega=20\,\pi\, rad/s$.}
\label{fig:comparison_grad}
\end{figure}
\subsubsection{Adaptive Dithers Amplitude}
We will now take benefit from the estimation of $\nabla h(kT_s+\floor{\epsilon'/T}T)$ to adapt the dithers amplitude, so as to improve the steady-state accuracy. Note that this adaptation may also be used for extremum seeking systems with continuous measurements.

The idea is to make the dithers amplitude vanish as the norm of the estimated gradient tends to zero. The proposed modification of system \eqref{eq:solution_ON_mod} is
\begin{equation}
\dot{x}(t)\!=\!\left\lbrace\begin{matrix} \sqrt{\omega}\alpha(t)\sum_{i=1}^{l} f_i(h_m(t,x(t)\!)\!) u_i(k_i\omega t) &\\ &\!\!\!\!\!\!\!\!\!\!\!\!\!\!\!\!\!\!\!\!\!\!\!\!\!\!\!\!\!\!\!\!\!\!\!\!\!\!\!\!\!\!\!\!\!\!\!\!\!\!\!\!\!\!\!\!\!\!\!\!\!\!\!\!  \text{if } \modi{t,T_s}<\floor{\epsilon'/T}T\\
\rho_2 g(\floor{t/T_s}T_s+\floor{\epsilon'/T}T) \qquad  \text{else,}&
\end{matrix}\right.\!\!\!
\label{eq:solution_ON_VDA_1}
\end{equation}
with 
\begin{equation}	\alpha(t)\!=\!\sqrt{\frac{\norm{g\left(\!(\floor{t/T_s}-1)T_s+\floor{\epsilon'/T}T\right)}+a}{\norm{g\left(\!(\floor{t/T_s}-1)T_s+\floor{\epsilon'/T}T\right)}+b}}
\end{equation}
and
\begin{multline}
g(t)=\frac{\sqrt{\omega}}{T}\sum_{i=1}^{l}\int_{t-T}^{t} \sqrt{ \frac{\norm{g(t-T_s)}+b}{\norm{g(t-T_s)}+a}}\times\\ f_i(h_m(\tau,x(\tau)\!)\!) u_i(k_i\omega \tau) d\tau,
\label{eq:deg_g_VDA}
\end{multline}
with $a\in\mathbb{R}_{>0}$, $b\in(a,\infty)$, and where the dithers and vector fields satisfy Assumptions \ref{as:dithers} and \ref{as:vector_fields_gen}. 

The parameter $b$ is introduced to ensure that $\alpha(t)$ exists, for all $t\in\mathbb{R}$, and that its value is always bounded by 1. The value of $b$ determines the threshold above which the dithers amplitude starts to "proportionally" decrease with $\norm{g}$. The parameter $a$ is an arbitrarily small parameter introduced to ensure that the inverse of $\alpha(t)$, used to compute $g(t)$, exists, for all $t\in\mathbb{R}$. Since $\floor{t/T_s}$ is constant for $t\in[kT_s,kT_s+\epsilon']$, the trajectory of system \eqref{eq:solution_ON_VDA_1} still approximates the trajectory of the gradient-descent law on that time-interval. The only difference is that the update rate is adapted from one period to the other. Furthermore, it may be shown that $g(\floor{t/T_s}T_s+\floor{\epsilon'/T}T)$ still provides an estimation of $-\rho \nabla h(x(\floor{t/T_s}T_s+\floor{\epsilon'/T}T)\!)$, as in \eqref{eq:solution_ON_mod}. System \eqref{eq:solution_ON_VDA_1} is thus expected to inherit from the properties of system \eqref{eq:solution_ON_mod}. For space limits, the rigorous characterization of the stability properties is, however, omitted.

The practical implementation of system \eqref{eq:solution_ON_VDA_1} is similar to the one of \eqref{eq:solution_ON_mod} (see Remark \ref{rem:implementation}). Note that, for the first period, the value of $\norm{g\left(\!(\floor{t/T_s}-1)T_s+\floor{\epsilon'/T}T\right)}$ is selected arbitrarily. 

To see the advantages of using variable dithers amplitude, we consider again our case study (see Section \ref{sec:basic_scheme}). Comparing Figure \ref{fig:grad_VDA} with Figure \ref{fig:comparison_grad}, one may see that the amplitude of the oscillations is smaller, for the same convergence time. Furthermore, for the vector fields selected as in \cite{Sch14}, one may see in Figure \ref{fig:grad_VDA_Sch} that, after 10s, the oscillations become invisible. By zooming on the steady-state, the error amounts now to $10^{-3}$, which is about 100 times smaller than without the adaptation of the dithers amplitude (cf. Figure \ref{fig:Scheinker_grad}). 
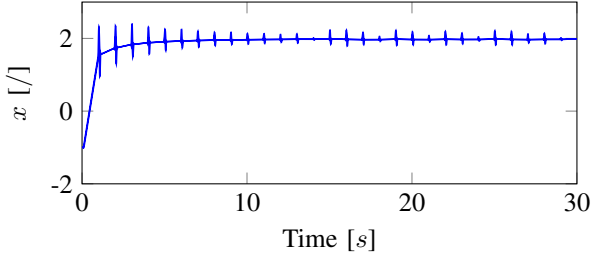
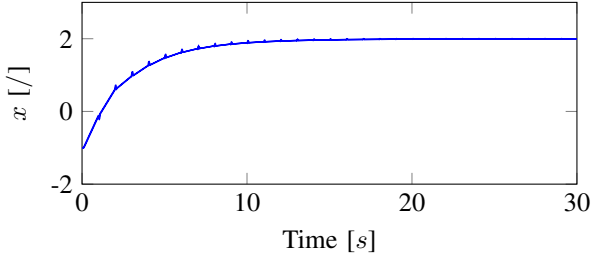
\begin{figure}[!ht]
\centering
\subfigure[$f_1(h(x)\!)=2\rho h(x)$ and $f_2(h(x)\!)=1$]{
	\begin{tikzpicture}
	\begin{axis}[width=0.45\textwidth,height=4cm,x axis line style={-stealth},y axis line style={-stealth},xtick={0,10,20,30},xticklabels={0,10,20,30},ytick={-2,0,2},yticklabels={-2,0,2}, ymax = 3,ymin=-2,xmax=30,xmin=0,xlabel={Time [$s$]},	ylabel={$x$ [$/$]}]
	\addplot[smooth,color=blue, fill=blue, line width=0.2mm] table[x index=0, y index=1] {Pictures/grad_VDA_Krs.dat};
	\end{axis}
	\end{tikzpicture}
	\label{fig:grad_VDA_Krs}}
\hfill
\subfigure[$f_1(h(x)\!)=\sqrt{2\rho} \cos(h(x)\!)$ and $f_2(h(x)\!)=-\sqrt{2\rho} \sin(h(x)\!)$]{
	\begin{tikzpicture}
	\begin{axis}[width=0.45\textwidth,height=4cm,x axis line style={-stealth},y axis line style={-stealth},xtick={0,10,20,30},xticklabels={0,10,20,30},ytick={-2,0,2},yticklabels={-2,0,2}, ymax = 3,ymin=-2,xmax=30,xmin=0,xlabel={Time [$s$]},	ylabel={$x$ [$/$]}]
	\addplot[smooth,color=blue, fill=blue, line width=0.2mm] table[x index=0, y index=1] {Pictures/grad_VDA.dat};
	\end{axis}
	\end{tikzpicture}
	\label{fig:grad_VDA_Sch}}
\caption{Comparison of the trajectories obtained when implementing system \eqref{eq:solution_ON_VDA_1} for two choices of vector fields, and with $h(x)=(x-2)^2+10$, $\rho_2=1.5\rho$, $u_1(t)=\cos(t)$, $u_2(t)=\sin(t)$, $k_1=k_2=1$, $\rho=0.25s^{-1}$, $t_0=0s$, $x(t_0)=-1$, $T_s=1s$, $\epsilon=0.17s$, $\rho_2=5\rho$, $a=10^{-5}$, $b=0.1$, and $\omega=20\,\pi\, rad/s$.}
\label{fig:grad_VDA}
\end{figure}
\section{Conclusion}
In this work, we considered the case of intermittent cost measurements in the framework of extremum seeking. We first showed the ability of a class of existing extremum seeking systems to steer the cost input towards the minimizer, despite the absence of cost measurement on some time-intervals. However, compared with the case of continuous measurements, this may result in a lower convergence rate and may require the use of a larger dithers frequency. We then proposed three modifications allowing to reduce the convergence time, while keeping a reasonable dithers frequency. The performances of the different schemes were compared on a case study. Future works may consist in considering a more general set-up. For instance, a nonlinear dynamical system might be associated with the cost function, presence of noise on the cost measurement might be taken into account, and higher-order Lie-bracket based extremum seeking systems might be considered. One may also think of applying the proposed extremum seeking systems on a more physical case study, such as the search and rescue of victims of avalanches with an ARVA beacon.
\appendices
\section{Proof of Theorem \ref{lem:classical_source}}
\label{ap:proof_classical_source}
For the sake of compactness, let us introduce $C\!=\!2\pi \textbf{LCM}(k_1^{-1},k_2^{-1},...,k_l^{-1})$ and $T\!=\!C\omega^{-1}$. Furthermore, let $\delta_V\in\mathbb{R}_{>0}$, and $\delta_B\in\mathbb{R}_{>0}$, be arbitrary, but fixed. Select then $\delta_V'\in(\delta_V,\infty)$, $\delta_W=2\delta_V'-\delta_V$, $\delta_Q'\in(0,\mins{\delta_B,\delta_V})$, and $\delta_Q\in(0,\delta_Q')$. 

To perform the proof, we proceed in two main steps. In \textit{Step 1}, we analyze the evolution of the trajectory of system \eqref{eq:classical_source} on the interval $[kT_s,(k+1)T_s)$, for $k\in\mathbb{Z}$. In \textit{Step 2}, we exploit the results of \textit{Step 1} to show that the three conditions of Definition \ref{def:sgpuas} are enforced. 

\textit{Step 1.} Given the composite nature of system \eqref{eq:classical_source}, we successively examine its trajectory on the two sub-intervals $[kT_s,kT_s+\epsilon)$ and $[kT_s+\epsilon,(k+1)T_s)$, for $k\in\mathbb{Z}$.

\textit{Step 1.1.} We know from Proposition 1 and Lemma 2 in \cite{Lab22} that there exist an $\omega^*_1\in\left(C\epsilon^{-1},\infty\right)$, an $M_V\in\mathbb{R}_{>0}$, and a $B\in\mathbb{R}_{>0}$, such that, for every $\omega\in(\omega^*_1,\infty)$, $k\in\mathbb{Z}$, $t_0\in[kT_s,kT_s+\epsilon)$, and $x_0\in U_{\mathbb{R}^n}^{x^*}(\delta_W)$, the trajectory of system \eqref{eq:classical_source}, through $x(t_0)=x_0$, satisfies
\begin{multline}
\!\!\!\norm{x(t)-x(t_0)}\leq \min\!\left\lbrace B\omega^{-0.5},0.5\!\mins{\delta_B,\delta_V}-0.5\delta_Q',\right.\\\left.0.5(\delta_V'-\delta_V),0.5\delta_Q,0.5(\delta_Q'-\delta_Q)\right\rbrace,
\label{eq:traj_on_std}
\end{multline}  
for all $t\in[t_0,t_0+\min\left\lbrace\epsilon-\modi{t_0,T_s},T\right\rbrace]$ and, if $\epsilon-\modi{t_0,T_s}\geq T$, 
\begin{multline}
\norm{x(t_0+T)-x^*}^2\leq \norm{x_0-x^*}^2\\-2T\left[\rho\nabla^T h(x_0)(x_0-x^*)-M_V\omega^{-0.5}\right].
\label{eq:decrease_norm}
\end{multline}
Given Assumption \ref{as:cost}, it holds $\nabla^T h(x) (x-x^*)>0$, for all $x\in\mathbb{R}^n\backslash\{x^*\}$. Let $m_g:=\inf_{x\in U_{\mathbb{R}}^{x^*}(\delta_W)\backslash U_{\mathbb{R}}^{x^*}(0.5\delta_Q)} \rho \nabla^T h(x)(x-x^*)$. Selecting $\omega^*_2:=\maxs{\omega_1^*, 4 M_V^2/m_g^2}$, it results from \eqref{eq:decrease_norm} that, for all $\omega\in(\omega^*_2,\infty)$, $k\in\mathbb{Z}$, $t_0\in[kT_s,kT_s+\epsilon-T)$, and $x_0\in U_{\mathbb{R}^n}^{x^*}(\delta_W)\backslash U_{\mathbb{R}^n}^{x^*}(0.5\delta_Q)$, the trajectory of system \eqref{eq:classical_source}, through $x(t_0)=x_0$, satisfies
\begin{equation}
\norm{x(t_0+T)-x^*}^2<\norm{x_0-x^*}^2-m_gT.
\label{eq:lyap_std}
\end{equation}

\textit{Step 1.2.} For all $k\in\mathbb{Z}$, $t_0\in[kT_s+\epsilon,(k+1)T_s)$, $\omega\in\mathbb{R}_{>0}$, and $x_0\in\mathbb{R}^n$, the trajectory of system \eqref{eq:classical_source}, through $x(t_0)=x_0$, satisfies
\begin{equation}
x(t)=x_0+\sqrt{\omega}\sum_{i=1}^{l} f_i(0) \int_{t_0}^{t} u_i(k_i\omega \tau)\, d\tau,
\label{eq:trajectory_OFF_classical}
\end{equation}
for all $t\in[t_0,(k+1)T_s]$. In virtue of Assumption \ref{as:dithers}, for all $i\in \{1,2,...,l\}$, it holds $\vert u_i(t)\vert\leq 1$,   for all $t\in\mathbb{R}$, and $\int_{t_0}^{t_0+2\pi} u_i(\tau)\, d\tau=0$, for all $t_0\in\mathbb{R}$. It follows thus from \eqref{eq:trajectory_OFF_classical} that, for all $k\in\mathbb{Z}$, $t_0\in[kT_s+\epsilon,(k+1)T_s)$, $\omega\in\mathbb{R}_{>0}$, and $x_0\in\mathbb{R}^n$, the trajectory of system \eqref{eq:classical_source}, through $x(t_0)=x_0$, satisfies
\begin{equation}
\norm{x(t)-x_0}\leq M_S \omega^{-0.5},
\label{eq:bound_traj_std}
\end{equation}
for all $t\in[t_0,(k+1)T_s]$, with $M_S:=\sum_{i=1}^{l} \norm{f_i(0)} C$. Let $\omega^*_3=\max\left\lbrace\omega_2^*,4M_S^2/\min\left\lbrace(\delta_V'-\delta_V)^2,(\delta_Q'-\delta_Q)^2\right\rbrace\!\right\rbrace$.

\textit{Step 2.} Let us now prove that the three properties of Definition \ref{def:sgpuas} are fulfilled. As introduced in Section \ref{sec:basic_scheme}, the idea is the following. For sufficiently large dithers frequencies, what occurs during the complete dithers periods, when the cost can be measured, is not too impacted by what happens on the rest of the transmission period. Since the initial time (i.e. $t_0$ in Definition \ref{def:sgpuas}) is arbitrary, the first transmission may be incomplete, preventing the use of this argument. In the sequel, we consider thus the first transmission period separately. We then use recursive arguments to handle the next periods. 

\textit{Step 2.1: Boundedness property.} It follows from \textit{Step 1.1.} (see \eqref{eq:traj_on_std}) that, for all $\omega\in(\omega^*_3,\infty)$, $k\in\mathbb{Z}$, and $t_0\in[kT_s,kT_s+\epsilon)$, if $x(t_0)\in U_{\mathbb{R}^n}^{x^*}(\delta_V)$, then $x(t)\in U_{\mathbb{R}^n}^{x^*}(0.5\delta_V+0.5\delta_V')$, for all $t\in[t_0,t_0+\mins{\epsilon-\modi{t_0,T_s},T}]$. In addition, for all $\omega\in(\omega^*_3,\infty)$, $k\in\mathbb{Z}$, $t_0\in[kT_s,kT_s+\epsilon-T)$, and $x(t_0)\in U_{\mathbb{R}^n}^{x^*}(\delta_V)\backslash U_{\mathbb{R}^n}^{x^*}(0.5\delta_Q)$, it holds (see \eqref{eq:lyap_std}) $\norm{x(t_0+T)-x^*}<\norm{x(t_0)-x^*}$ and, hence, $x(t_0+T)\in U_{\mathbb{R}^n}^{x^*}(\delta_V)$. Furthermore, for all $\omega\in(\omega^*_3,\infty)$, $k\in\mathbb{Z}$, $t_0\in[kT_s,kT_s+\epsilon-T)$, and $x(t_0)\in U_{\mathbb{R}^n}^{x^*}(0.5\delta_Q)$, one has (see \eqref{eq:traj_on_std}) $x(t_0+T)\in U_{\mathbb{R}^n}^{x^*}(\delta_Q)$ and, hence, $x(t_0+T)\in U_{\mathbb{R}^n}^{x^*}(\delta_V)$. Combining those results allows us to conclude that, for all $\omega\in(\omega^*_3,\infty)$, $k\in\mathbb{Z}$, and $t_0\in[kT_s,kT_s+\epsilon)$, if $x(t_0)\in U_{\mathbb{R}^n}^{x^*}(\delta_V)$, then $x(t)\in U_{\mathbb{R}^n}^{x^*}(0.5\delta_V+0.5\delta_V')$, for all $t\in[t_0,kT_s+\epsilon]$. Furthermore, we know from \textit{Step 1.2.} (see \eqref{eq:bound_traj_std}) that, for all $\omega\in(\omega^*_3,\infty)$, $k\in\mathbb{Z}$, $t_0\in[kT_s+\epsilon,(k+1)T_s)$, and $x(t_0)\in\mathbb{R}^n$, it holds $\norm{x(t)-x(t_0)}\leq M_S \omega^{-0.5}$, for all $t\in[t_0,(k+1)T_s]$. We may thus conclude that, for all $\omega\in(\omega^*_3,\infty)$, $t_0\in\mathbb{R}$, and $x_0\in U_{\mathbb{R}^n}^{x^*}(\delta_V)$, the trajectory of system \eqref{eq:classical_source}, through $x(t_0)=x_0$, satisfies $x(t)\in U_{\mathbb{R}^n}^{x^*}(\delta_V')$ and, hence , $x(t)\in U_{\mathbb{R}^n}^{x^*}(\delta_W)$, for all $t\in[t_0,t_0+T_s-\modi{t_0,T_s}]$. To conclude the boundedness property, we will now show the existence of an $\omega_4^*\in(\omega^*_3,\infty)$ such that, for all $\omega\in(\omega^*_4,\infty)$, $k\in\mathbb{Z}$, and $x_0\in U_{\mathbb{R}^n}^{x^*}(\delta_V')$, the trajectory of system \eqref{eq:classical_source}, through $x(kT_s)=x_0$, satisfies $x(t)\in U_{\mathbb{R}^n}^{x^*}(\delta_W)$, for all $t\in[kT_s,(k+1)T_s]$, and $x((k+1)T_s)\in U_{\mathbb{R}^n}^{x^*}(\delta_V')$.
It may already be concluded from the choice of $\omega^*_3$ that, if $x(kT_s)\in U_{\mathbb{R}^n}^{x^*}(\delta_V')$, then (see \eqref{eq:traj_on_std} and \eqref{eq:lyap_std}) $x(t)\in U_{\mathbb{R}^n}^{x^*}(\delta_W)$, for all $t\in[kT_s,kT_s+\floor{\epsilon/T}T]$, and $x(kT_s+\floor{\epsilon/T}T)\in U_{\mathbb{R}^n}^{x^*}(\delta_V')$. Two cases may then be distinguished: either $x(kT_s+\floor{\epsilon/T}T)\in U_{\mathbb{R}^n}^{x^*}(\delta_V)$, and we already know from the above result that $x(t)\in U_{\mathbb{R}^n}^{x^*}(\delta_V')$ and, hence, $x(t)\in U_{\mathbb{R}^n}^{x^*}(\delta_W)$, for all $t\in[kT_s+\floor{\epsilon/T}T, (k+1)T_s]$, or $x(kT_s+\floor{\epsilon/T}T)\in U_{\mathbb{R}^n}^{x^*}(\delta_V')\backslash U_{\mathbb{R}^n}^{x^*}(\delta_V)$. The latter case implies that $x(kT_s+lT)\in U_{\mathbb{R}^n}^{x^*}(\delta_V')\backslash U_{\mathbb{R}^n}^{x^*}(\delta_V)$, for all $l\in\{0,...,(\floor{\epsilon/T}-1)T\}$. Therefore, it results from the choice of $\omega_3^*$ that (see \eqref{eq:lyap_std}) $\norm{x(kT_s+\floor{\epsilon/T}T)-x^*}^2<\norm{x(kT_s)-x^*}^2-m_g(\epsilon-C/\omega_3^*)$. Selecting $\omega_4^*:=\maxs{\omega_3^*,(M_S+B)^2/(\delta_V'-\sqrt{\delta_V'^2-m_g(\epsilon-C/\omega_3^*)})^2}$, it follows then that $\norm{x(t)-x^*}< \sqrt{\delta_V'^2-m_g(\epsilon-C/\omega_3^*)}+(B+M_S)/\sqrt{\omega}<\delta_V'$ and, hence, $x(t)\in U_{\mathbb{R}^n}^{x^*}(\delta_W)$, for all $\omega\in(\omega^*_4,\infty)$, and $t\in[kT_s+\floor{\epsilon/T}T,(k+1)T_s]$, concluding the boundedness property.

\textit{Step 2.2: Stability property.} Let us follow a reasoning similar to the one used in \textit{Step 2.1}. The results of \textit{Step 1.1.} (see \eqref{eq:traj_on_std}) ensure that, for all $\omega\in(\omega_4^*,\infty)$, $k\in\mathbb{Z}$, $t_0\in[kT_s,kT_s+\epsilon)$, and $x(t_0)\in U_{\mathbb{R}^n}^{x^*}(\delta_Q)$, one has $x(t)\in U_{\mathbb{R}^n}^{x^*}(0.5\delta_Q+0.5\delta_Q')$, for all $t\in[t_0,t_0+\mins{\epsilon-\modi{t_0,T_s},T}]$. Moreover, for all $\omega\in(\omega_4^*,\infty)$, $k\in\mathbb{Z}$, $t_0\in[kT_s,kT_s+\epsilon-T)$, and $x(t_0)\in U_{\mathbb{R}^n}^{x^*}(\delta_Q)\backslash U_{\mathbb{R}^n}^{x^*}(0.5\delta_Q)$, it holds (see \eqref{eq:lyap_std}) $\norm{x(t_0+T)-x^*}<\norm{x(t_0)-x^*}$ and, hence, $x(t_0+T)\in U_{\mathbb{R}^n}^{x^*}(\delta_Q)$. In addition, for all $\omega\in(\omega_4^*,\infty)$, $k\in\mathbb{Z}$, $t_0\in[kT_s,kT_s+\epsilon-T)$, and $x(t_0)\in U_{\mathbb{R}^n}^{x^*}(0.5\delta_Q)$, it holds (see \eqref{eq:traj_on_std}) $x(t_0+T)\in U_{\mathbb{R}^n}^{x^*}(\delta_Q)$. Furthermore, we know from \textit{Step 1.2.} (see \eqref{eq:bound_traj_std}) that, for all $\omega\in(\omega_4^*,\infty)$, $k\in\mathbb{Z}$, $t_0\in[kT_s+\epsilon,(k+1)T_s)$, and $x(t_0)\in \mathbb{R}^n$, one has $\norm{x(t)-x(t_0)}<0.5(\delta_Q'-\delta_Q)$, for all $t\in[t_0,(k+1)T_s]$. We may thus already conclude that, for all $\omega\in(\omega^*_4,\infty)$, $t_0\in\mathbb{R}$, and $x(t_0)\in U_{\mathbb{R}^n}^{x^*}(\delta_Q)$, it holds $x(t)\in U_{\mathbb{R}^n}^{x^*}(\delta_Q')$, for all $t\in[t_0,t_0+T_s-\modi{t_0,T_s}]$. To conclude the stability property, we will now show the existence of an $\omega^*_5\in(\omega^*_4,\infty)$ such that, for every $\omega\in(\omega^*_5,\infty)$ and $k\in\mathbb{Z}$, if $x(kT_s)\in U_{\mathbb{R}^n}^{x^*}(\delta_Q')$, then $x(t)\in U_{\mathbb{R}^n}^{x^*}(\delta_B)$, for all $t\in[kT_s,(k+1)T_s]$, and $x((k+1)T_s)\in U_{\mathbb{R}^n}^{x^*}(\delta_Q')$. To do so, first note that the choice of $\omega_4^*$ ensures that, for all $\omega\in(\omega^*_4,\infty)$, if $x(kT_s)\in U_{\mathbb{R}^n}^{x^*}(\delta_Q')$, then (see \eqref{eq:traj_on_std} and \eqref{eq:lyap_std}) $x(t)\in U_{\mathbb{R}^n}^{x^*}(\delta_B)$, for all $t\in[kT_s,kT_s+\floor{\epsilon/T}T]$, and $x(kT_s+\floor{\epsilon/T}T)\in U_{\mathbb{R}^n}^{x^*}(\delta_Q')$. Two cases may then be distinguished: either $x(kT_s+\floor{\epsilon/T}T)\in U_{\mathbb{R}^n}^{x^*}(\delta_Q)$, and we already know from the above result that $x(t)\in U_{\mathbb{R}^n}^{x^*}(\delta_Q')$ and, hence, $x(t)\in U_{\mathbb{R}^n}^{x^*}(\delta_B)$, for all $t\in[kT_s+\floor{\epsilon/T}T, (k+1)T_s]$, or $x(kT_s+\floor{\epsilon/T}T)\in U_{\mathbb{R}^n}^{x^*}(\delta_Q')\backslash U_{\mathbb{R}^n}^{x^*}(\delta_Q)$. The latter case implies that $x(kT_s+lT)\in U_{\mathbb{R}^n}^{x^*}(\delta_Q')\backslash U_{\mathbb{R}^n}^{x^*}(\delta_Q)$, for all $l\in\{0,...,(\floor{\epsilon/T}-1)T\}$. Therefore, it holds from the choice of $\omega_4^*$ (see \eqref{eq:lyap_std}) that $\norm{x(kT_s+\floor{\epsilon/T}T)-x^*}^2<\norm{x(kT_s)-x^*}^2-m_g(\epsilon-C/\omega_4^*)$. Selecting $\omega_5^*:=\maxs{\omega_4^*,(M_S+B)^2/(\delta_Q'-\sqrt{\delta_Q'^2-m_g(\epsilon-C/\omega_4^*)})^2}$, it holds then that $\norm{x(t)-x^*}< \sqrt{\delta_Q'^2-m_g(\epsilon-C/\omega_4^*)}+(B+M_S)/\sqrt{\omega}<\delta_Q'$, and hence $x(t)\in U_{\mathbb{R}^n}^{x^*}(\delta_B)$, for all $\omega\in(\omega^*_5,\infty)$, and $t\in[kT_s+\floor{\epsilon/T}T,(k+1)T_s]$, concluding the stability property.

\textit{Step 2.3: Practical convergence.} We know from \textit{Step 2.1.} that, for every $\omega\in(\omega_5^*,\infty)$, $t_0\in\mathbb{R}$, and $x(t_0)\in U_{\mathbb{R}^n}^{x^*}(\delta_V)$, it holds $x(t_0+T_s-\modi{t_0,T_s})\in U_{\mathbb{R}^n}^{x^*}(\delta_V')$. To prove the practical convergence property, it is thus sufficient to show that, for every $\omega\in(\omega_5^*,\infty)$, there exists a $t_1\in\mathbb{R}_{\geq 0}$ such that, for every $k_0\in\mathbb{Z}$, and $x(k_0T_s)\in U_{\mathbb{R}^n}^{x^*}(\delta_V')$, there exists a $t_1'\in[0,t_1]$ such that $x(k_0T_s+t_1')\in U_{\mathbb{R}^n}^{x^*}(\delta_Q)$. Referring to the stability property we proved in \textit{Step 2.2.}, it would indeed result $x(t)\in U_{\mathbb{R}^n}^{x^*}(\delta_B)$, for all $t\geq t_0+T_s+t_1$. Let $\omega\in(\omega^*_5,\infty)$. Note that the choice of $\omega$ ensures that (cf. \textit{Step 2.1.}) $\sqrt{\norm{x-x^*}^2-m_g(\epsilon-C/\omega)}+(B+M_S)\omega^{-0.5}<\norm{x-x^*}$, for all $x\in U_{\mathbb{R}^n}^{x^*}(\delta_V')\backslash U_{\mathbb{R}^n}^{x^*}(0.5\delta_Q)$. Therefore, there exists an $m_d\in\mathbb{R}_{>0}$ such that $\sqrt{\norm{x-x^*}^2-m_g(\epsilon-C/\omega)}+(B+M_S)\omega^{-0.5}<\norm{x-x^*}-m_d$, for all $x\in U_{\mathbb{R}^n}^{x^*}(\delta_V')\backslash U_{\mathbb{R}^n}^{x^*}(\delta_Q)$. Define $k_1:=\ceil(\!(\delta_V'-\delta_Q)/m_d)$. We will now show that, for every $k_0\in\mathbb{Z}$, and $x(k_0T_s)\in U_{\mathbb{R}^n}^{x^*}(\delta_V')$, there exists a $t_1'\in[0,k_1T_s]$ such that $x(k_0T_s+t_1')\in U^{x^*}_{\mathbb{R}^n}(\delta_Q)$. To do so, let us proceed by contradiction, and assume that there exists a $k_0'\in\mathbb{Z}$, and an $x(k_0'T_s)\in U_{\mathbb{R}^n}^{x^*}(\delta_V')$ such that $x(k_0'T_s+t)\not\in U_{\mathbb{R}^n}^{x^*}(\delta_Q)$, for all $t\in[0,k_1T_s]$. Since $x(k_0'T_s)\in U_{\mathbb{R}^n}^{x^*}(\delta_V')$, it results (see \textit{Step 2.1.}) that $x(k_0'T_s+t)\in U_{\mathbb{R}^n}^{x^*}(\delta_W)$ and, hence, $x(k_0'T_s+t)\in U_{\mathbb{R}^n}^{x^*}(\delta_W)\backslash U_{\mathbb{R}^n}^{x^*}(\delta_Q)$, for all $t\in[0,k_1T_s]$. We may thus write from \eqref{eq:lyap_std}, \eqref{eq:traj_on_std}, and \eqref{eq:bound_traj_std} that $\norm{x(\!(k_0'+k+1)T_s)-x^*}<\sqrt{\norm{x(\!(k_0'+k)T_s)-x^*}^2-m_g(\epsilon-C/\omega)}+(B+M_S)\omega^{-0.5}$, for all $k\in\{0,..,k_1-1\}$, yielding $\norm{x(\!(k_0'+k_1)T_s)-x^*}< \norm{x(k_0'T_s)-x^*}-m_d k_1$. Since $\norm{x(k_0'T_s)-x^*}\leq \delta_V'$, and since $k_1=\ceil(\!(\delta_V'-\delta_Q)/m_d)$, it results then $\norm{x(\!(k_0'+k_1)T_s)-x^*}<\delta_Q$, leading to a contradiction, and concluding the proof.
\section{Proof of Theorem \ref{lem:stop}}
\label{ap:proof_stop}
To perform the proof, we proceed in two steps. In \textit{Step 1}, we consider the extremum seeking system \eqref{eq:classical_ES}, and prove its semi-global practical uniform asymptotic stability. In \textit{Step 2}, we show that, for every $T_s\in\mathbb{R}_{>0}$, $\epsilon\in(0,T_s]$, $\omega\in\mathbb{R}_{>0}$, $\tau_0\in\mathbb{R}$, $t_0\in\mathbb{R}$, and $x_0\in\mathbb{R}^n$, the path of system \eqref{eq:classical_ES}, through $x_c(\tau(t_0)\!)=x_0$, is the same as the path of system \eqref{eq:modification_ES_intermittent_mod}, through $x(t_0)=x_0$.  We also link the convergence time of those two systems.

\textit{Step 1}. First, note that, in virtue of Assumption \ref{as:vector_fields_gen}, the Lie-bracket system associated with system \eqref{eq:classical_ES} is nothing but the gradient descent law 
\begin{equation}
\dot{\overline{x}}_c=-\rho \nabla h(\overline{x}_c).
\label{eq:associated_Lie_wait}
\end{equation}
Consider then the Lyapunov function candidate $V(\overline{x}_c)=0.5(\overline{x}_c-x^*)^T(\overline{x}_c-x^*)$. Its time-derivative, along the trajectory of system \eqref{eq:associated_Lie_wait}, is $\dot{V}(\overline{x}_c)=-\rho \nabla^Th(\overline{x}_c)(\overline{x}_c-x^*)$. In virtue of Assumption \ref{as:cost}, it holds $\nabla^Th(x)(x-x^*)>0$, for all $x\in\mathbb{R}^n\backslash\{x^*\}$, and hence $\dot{V}(\overline{x}_c)<0$, for all $\overline{x}_c\in\mathbb{R}^n\backslash\{x^*\}$. This concludes the global asymptotic stability of $x^*$  for system \eqref{eq:associated_Lie_wait}. It follows then from Lemma \ref{lem:approx_traj} that the point $x^*$ is sGPUAS for system \eqref{eq:classical_ES}, with the parameter $\omega^{-1}$. 

\textit{Step 2}. By assumption of the lemma, we know that $h(x)\neq0$, for all $x\in\mathbb{R}^n$. Accordingly, having $h_m(t,x(t)\!)=0$ implies that $\modi{t,T_s}\in[\epsilon,T_s)$ (i.e. that there is no cost measurement). Let $T_s\in\mathbb{R}_{>0}$, $\epsilon\in(0,T_s]$, $\omega\in\mathbb{R}_{>0}$, $\tau_0\in\mathbb{R}$, $t_0\in\mathbb{R}$, and $x_0\in\mathbb{R}^n$, be arbitrary, and let $k_0:=\floor{t_0/T_s}$. Note that the trajectory of system \eqref{eq:modification_ES_intermittent_mod}, through $x(t_0)=x_0$, satisfies $x(t)=x(kT_s+\epsilon)$, and $\tau(t)=\tau(kT_s+\epsilon)$, for all $k\in\mathbb{N}_{\geq k_0}$ and $t\in[kT_s+\epsilon,(k+1)T_s]$. Therefore, the path of system \eqref{eq:modification_ES_intermittent_mod}, though $x(t_0)=x_0$, is the same as the path of system \eqref{eq:classical_ES}, through $x_c(\tau(t_0)\!)=x_0$. Furthermore, if the trajectory of system \eqref{eq:classical_ES}, through $x_c(\tau(t_0)\!)=x_0$, enters in a given set at a time $t_1\in\mathbb{R}$, the trajectory of system \eqref{eq:modification_ES_intermittent_mod}, through $x(t_0)=x_0$, enters in the given set at a time $t_2\in[t_1,T_s/\epsilon t_1+T_s]$. Combining those results with the one of \textit{Step 1} concludes then the proof. 

\section{Proof of Lemma \ref{lem:approx_gradient_average}}
\label{ap:proof_gradient_average}
For the sake of compactness, let us introduce $v_i(t):=\sqrt{\omega}u_i(k_i\omega t)$, and $\tilde{f}_i(x)=f_i(h(x)\!)$, for all $i\in\{1,2,...,l\}$.

Note that, since $\omega\geq C(\epsilon')^{-1}$, it holds $k_0T_s+\epsilon'-T\in[k_0T_s,k_0T_s+\epsilon)$. Therefore, since $\epsilon'\in(0,\epsilon]$,  one has $h_m(\theta,x(\theta)\!)=h(x(\theta)\!)$, for all $\theta\in[k_0T_s+\epsilon'-T,k_0T_s+\epsilon')$. One may thus write from \eqref{eq:def_g} 
\begin{equation}
g(k_0T_s+\epsilon')=\sum_{i=1}^{l}\frac{1}{T}\int_{k_0T_s+\epsilon'-T}^{k_0T_s+\epsilon'}   \tilde{f}_i(x(\theta)\!) v_i(\theta) \, d\theta.
\label{eq:def_g_k}
\end{equation}

We are now going to apply the fundamental theorem of calculus, together with some mathematical tools, to show that the right hand-side of \eqref{eq:def_g_k} may be expressed in function of the Lie-brackets of the vector fields, plus a remainder. We will then show that the remainder may be bounded so as to obtain \eqref{eq:bound_approx_gradient_lemma}. 

First, note that the trajectory of system \eqref{eq:solution_ON_mod} is absolutely continuous on $[k_0T_s+\epsilon'-T,k_0T_s+\epsilon']$. One may thus apply the fundamental theorem of calculus to write
\begin{multline}
\tilde{f}_i(x(\theta)\!)=\tilde{f}_i(x(k_0T_s+\epsilon')\!)\\+\int_{k_0T_s+\epsilon'}^{\theta} \pazocal{D} \tilde{f}_i(x(\sigma)\!)\dot{x}(\sigma)\, d\sigma,
\label{eq:fundamental_f_i}
\end{multline}
for all $\theta\in[k_0T_s+\epsilon'-T,k_0T_s+\epsilon']$, and $i\in\{1,2,...,l\}$. Furthermore, since $\omega\geq C(\epsilon)^{-1}$, one has $\dot{x}(\sigma)=\sum_{i=1}^{l} \tilde{f}_i(x(\sigma)\!) v_i(\sigma)$, for almost every $\sigma\in[k_0T_s+\epsilon'-T,k_0T_s+\epsilon')$. It results then from \eqref{eq:fundamental_f_i}
\begin{multline}
\tilde{f}_i(x(\theta)\!)=\tilde{f}_i(x(k_0T_s+\epsilon')\!)\\+\sum_{j=1}^{l}\int_{k_0T_s+\epsilon'}^{\theta} \pazocal{L}_{\tilde{f}_j} \tilde{f}_i(x(\sigma)\!)v_j(\sigma)\, d\sigma,
\label{eq:fundamental_f_i_2}
\end{multline}
for all $\theta\in[k_0T_s+\epsilon'-T,k_0T_s+\epsilon']$ and $i\in\{1,2,...,l\}$. Substituting \eqref{eq:fundamental_f_i_2} in \eqref{eq:def_g_k} yields then
\begin{multline}
g(k_0T_s+\epsilon')=\sum_{i=1}^{l}\frac{1}{T}\tilde{f}_i(x(k_0T_s+\epsilon')\!)\times\\ \underbrace{\int_{k_0T_s+\epsilon'-T}^{k_0T_s+\epsilon'}\! v_i(\theta) d\theta}_{:=0}
+\!\!\sum_{1\leq i\leq l\atop 1\leq j\leq l}\!\frac{1}{T}\!\int_{k_0T_s+\epsilon'-T}^{k_0T_s+\epsilon'}v_i(\theta)...\\...\int_{k_0T_s+\epsilon'}^{\theta} \pazocal{L}_{\tilde{f}_j} \tilde{f}_i(h(x(\sigma)\!)\!) v_j(\sigma)\, d\sigma\,d\theta,
\label{eq:average_f_i}
\end{multline}
where we exploited the fact that the mean value of the dithers on a period is zero and that $T$ is a common period to the dithers. Applying again the fundamental theorem of calculus, one may write 
\begin{equation}
g(k_0T_s+\epsilon')\!=\frac{1}{T}\!\!\sum_{1\leq i\leq l\atop 1\leq j\leq l}\!  \pazocal{L}_{\tilde{f}_j} \tilde{f}_i(x(k_0T_s+\epsilon')\!)V_{ij}+R,\!\!
\label{eq:average_f_i_2}
\end{equation}
where we introduced 
\begin{multline}
\!\!\!\!\!\!R=\!\frac{1}{T}\!\!\!\sum_{1\leq i\leq l\atop {1\leq j\leq l\atop 1\leq m\leq l}}\!\!\int_{k_0T_s+\epsilon'-T}^{k_0T_s+\epsilon'}v_i(\theta)\!\int_{k_0T_s+\epsilon'}^{\theta} v_j(\sigma)...\\...\int_{k_0T_s+\epsilon'}^{\sigma} \pazocal{L}_{\tilde{f}_m}\pazocal{L}_{\tilde{f}_j}\tilde{f}_i(x(\mu)\!)v_m(\mu)\,d\mu\,d\sigma\,d\theta
\label{eq:def_R}
\end{multline}
and
\begin{equation}
V_{ij}=\int_{k_0T_s+\epsilon'-T}^{k_0T_s+\epsilon'}v_i(\theta)\int_{k_0T_s+\epsilon'-T}^{\theta} \!v_j(\sigma) d\sigma d\theta,
\end{equation}
and we exploited the fact that, since $T$ is a common period to the $v_i$, and since they have zero mean on a period, it holds $\int_{k_0T_s+\epsilon'-T}^{\theta} v_j(\sigma)\, d\sigma=\int_{k_0T_s+\epsilon'}^{\theta} v_j(\sigma)\, d\sigma$, for all $\theta\in\mathbb{R}$. 

To make appear the Lie-brackets in \eqref{eq:average_f_i_2}, let us divide the sum as follows
\begin{equation}
\sum_{1\leq i\leq l\atop 1\leq j\leq l}(\cdot)=\sum_{1\leq i< l\atop i< j\leq l}(\cdot)+\sum_{1\leq i\leq l\atop j=i}(\cdot)+\sum_{1< i\leq l\atop 1\leq j<i}(\cdot).
\label{eq:decomposition_sum}
\end{equation}
Performing an integration by part, while remembering that $\int_{k_0T_s+\epsilon'-T}^{k_0T_s+\epsilon'}v_i(\theta)\, d\theta=0$, for all $i\in\{1,2,...,l\}$, one may write, $V_{ij}=-V_{ji}$, for all $i,j\in\{1,2,...,l\}$. Exploiting this result in the first and second term of the right-side of \eqref{eq:decomposition_sum}, one obtains then from \eqref{eq:average_f_i_2}
\begin{multline}
\!\!\!\!\!g(k_0T_s+\epsilon')\!=\!-\frac{1}{T}\!\sum_{1\leq i< l\atop i<j\leq l}\! \pazocal{L}_{\tilde{f}_j} \tilde{f}_i(x(k_0T_s+\epsilon')\!)V_{ji}\\\!\!\!+\frac{1}{T}\sum_{1< i\leq l\atop 1\leq j< i} \pazocal{L}_{\tilde{f}_j} \tilde{f}_i(x(k_0T_s+\epsilon')\!)V_{ij}+R.
\label{eq:average_f_i_3}
\end{multline}
Switching the indices $i$ and $j$ in the second term of the right-side of \eqref{eq:average_f_i_3} and exploiting the fact that $\sum_{1<j\leq l\atop 1\leq i< j}a_ib_j=\sum_{1\leq i< l\atop i< j\leq i}a_ib_j$ yields
\begin{multline}
g(k_0T_s+\epsilon')\!=\!\sum_{1\leq i< l\atop i<j\leq l}\!\left(\pazocal{L}_{\tilde{f}_i} \tilde{f}_j(x(k_0T_s+\epsilon')\!)\right.\\\left.-\pazocal{L}_{\tilde{f}_j} \tilde{f}_i(x(k_0T_s+\epsilon')\!)\right)\gamma_{ij}\!+\!R,
\label{eq:average_f_i_4}
\end{multline}
where we exploited the definition of $\gamma_{ij}$ given in \eqref{eq:def_gamma_ij}. This allows us to conclude that 
\begin{equation}
g(k_0T_s+\epsilon')\!=\!\sum_{1\leq i< l\atop i<j\leq l}\![\tilde{f}_i,\tilde{f}_j](x(k_0T_s+\epsilon')\!) \gamma_{ij}+ R.
\label{eq:bound_g_k}
\end{equation}
We know from Assumption \ref{as:vector_fields_gen} that 
\begin{equation}
\sum_{1\leq i< l\atop i<j\leq l}[\tilde{f}_i,\tilde{f}_j](h(x)\!) \gamma_{ij}=-\rho \nabla h(x),
\end{equation}
for all $x\in\mathbb{R}^n$. Furthermore, since $x(t)\in\mathcal{W}$, for all $t\in[k_0T_s+\epsilon'-T,k_0T_s+\epsilon']$, and since the $\tilde{f}_i(x)$ are vector fields of class $C^2$, there exists an $M\in\mathbb{R}_{>0}$ such that $\norm{\pazocal{L}_{\tilde{f}_m}\pazocal{L}_{\tilde{f}_j}\tilde{f}_i(x(t)\!)}<M$, for all $t\in[k_0T_s+\epsilon'-T,k_0T_s+\epsilon']$ and $i,j,m\in\{1,2,...,l\}$. Accordingly, remembering that $T:=C\omega^{-1}$, and that $\sup_{t\in\mathbb{R}}\vert v_i(t)\vert \leq \sqrt{\omega}$, for all $i\in\{1,2,...,l\}$, one may bound the remainder $R$, defined in \eqref{eq:def_R}, as follows
\begin{equation}
\norm{R}\leq l^3 M C^2 \omega^{-0.5}.
\end{equation} 
This allows us to obtain \eqref{eq:bound_approx_gradient_lemma} from \eqref{eq:bound_g_k}, concluding the proof. 
\section{Proof of Theorem \ref{lem:stability_gradient_average}}
\label{ap:proof_stability_gradient_average}
For the sake of compactness, let us introduce $C\!=\!2\pi \textbf{LCM}(k_1^{-1},k_2^{-1},...,k_l^{-1})$, $T\!=\!C\omega^{-1}$, and $k_T\!=\!\floor{\epsilon'/T}$. Let $\delta_V\in\mathbb{R}_{>0}$, and $\delta_B\in\mathbb{R}_{>0}$, be arbitrary, but fixed. Select then $\delta_V'\in(\delta_V,\infty)$, $\delta_Q'\in(0,\mins{\delta_B,\delta_V})$, $\delta_Q\in(0,\delta_Q')$, and $\delta_W\in(\delta_V',\infty)$.

To perform the proof, we refer to Definition \ref{def:sgpuas}, and show that its three properties are enforced by system \eqref{eq:solution_ON_mod}. As in the proof of Theorem \ref{lem:classical_source}, we consider the first transmission period separately. We then use recursive arguments to handle the next periods.

\textit{Step 1: Boundedness.} Define $\omega^*_1=C(\epsilon')^{-1}$. Note that the dynamics of system \eqref{eq:solution_ON_mod} is the same as the one of system \eqref{eq:classical_source}, for all $\omega\in[\omega^*_1,\infty)$, $k\in \mathbb{Z}$ and $t\in[kT_s,kT_s+k_T T)$. It follows thus from the proof of Theorem \ref{lem:classical_source} that there exists an $\omega^*_2\in(\omega^*_1,\infty)$ such that, for all $\omega\in(\omega^*_2,\infty)$, $k\in\mathbb{Z}$, and $t_0\in[kT_s,kT_s+k_T T)$, if $x(t_0)\in U_{\mathbb{R}^n}^{x^*}(\delta_V)$, then $x(t)\in U_{\mathbb{R}^n}^{x^*}(0.5\delta_V+0.5\delta_V')$, for all $t\in[t_0,kT_s+k_T T]$. Furthermore, Assumptions \ref{as:cost} and \ref{as:vector_fields_gen} ensure the existence of an $M\in\mathbb{R}_{>0}$ such that $\norm{f_i(h_m(t,x)\!)}<M$, for all $i\in\{1,2,...,l\}$, $t\in\mathbb{R}$, and $x\in U^{x^*}_{\mathbb{R}^n}(0.5\delta_V+0.5\delta_V')$. Define $\rho^*_2(\omega)=0.5(\delta_V'-\delta_V)/(\sqrt{\omega}lM(T_s-k_TT)\!)$. It results thus that, for all $k\in\mathbb{Z}$, $t_0\in[kT_s+k_T T,(k+1)T_s)$, $\omega\in(\omega^*_2,\infty)$, and $\rho_2\in(0,\rho_2^*(\omega)\!)$, if $x(t)\in U_{\mathbb{R}^n}^{x^*}(0.5\delta_V+0.5\delta_V')$, for all $t\in[t_0-T,t_0]$, then $\norm{g(\floor{t/T_s}T_s+k_TT)}<\sqrt{\omega}lM$ and, hence, $x(t)\in U_{\mathbb{R}^n}^{x^*}(\delta_V')$, for all $t\in[t_0,(k+1)T_s]$. One may thus conclude that, for all $t_0\in\mathbb{R}$,  $\omega\in(\omega^*_2,\infty)$, and $\rho_2\in(0,\rho_2^*(\omega)\!)$, if $x(t)\in U_{\mathbb{R}^n}^{x^*}(\delta_V)$, for all $t\in[t_0-T,t_0]$, then it holds $x(t)\in  U_{\mathbb{R}^n}^{x^*}(\delta_V')$, for all $t\in[t_0,t_0+T_s-\modi{t_0,T_s}]$. Define now  $V(x)=0.5(x-x^*)^T(x-x^*)$. We know from Proposition 1 and Lemma 2 in \cite{Lab22} that there exist an $\omega_3^*\in(\omega^*_2,\infty)$, and an $M_V\in\mathbb{R}_{>0}$, such that, for every $k_0\in\mathbb{Z}$, $k\in\{0,...,k_T-1\}$, $x_0\in U_{\mathbb{R}^n}^{x^*}(\delta_V')$, and $\omega\in(\omega^*_3,\infty)$, the trajectory of system \eqref{eq:solution_ON_mod}, through $x(k_0T_s+kT)=x_0$, satisfies
\begin{multline}
\norm{x(t)-x(k_0T_s+kT)}\\<\mins{\delta_W-\delta_V',0.5\mins{\delta_B,\delta_V}-0.5\delta_Q'},
\label{eq:traj_mod_lb}
\end{multline}
for all $t\in[k_0T_s+kT,k_0T_s+(k+1)T]$ and
\begin{multline}
V(x(k_0T_s+(k+1)T)\!)\leq V(x(k_0T_s+kT)\!)+TM_V\omega^{-0.5}\\-\rho T\nabla^T h(x(k_0T_s+kT)\!)(x(k_0T_s+kT)-x^*).
\label{eq:lya_mod}
\end{multline}

In virtue of Assumption \ref{as:cost}, it holds $\nabla^T h(x) (x-x^*)>0$, for all $x\in\mathbb{R}^n\backslash\{x^*\}$. Let then $m_g:=\inf_{x\in U_{\mathbb{R}}^{x^*}(\delta_V')\backslash U_{\mathbb{R}}^{x^*}(\delta_Q)} \nabla^T h(x) (x-x^*)$ and define $\omega_4^*=\maxs{\omega_3^*,M_V^2/(\rho^2m_g^2)}$. 

Let $x_0\in U_{\mathbb{R}^n}^{x^*}(\delta_V')$, $k_0\in\mathbb{Z}$, $k\in\{0,...,k_T-1\}$, and $\omega\in(\omega^*_4,\infty)$. We already know from \eqref{eq:traj_mod_lb} that the trajectory of system \eqref{eq:solution_ON_mod}, through $x(k_0T_s+kT)=x_0$, satisfies $x(t)\in U_{\mathbb{R}^n}^{x^*}(\delta_W)$, for all $t\in[k_0T_s+kT,k_0T_s+(k+1)T]$. Two cases may then be distinguished. Either $x(k_0T_s+kT)\in U_{\mathbb{R}^n}^{x^*}(\delta_V')\backslash U_{\mathbb{R}^n}^{x^*}(\delta_Q')$ or $x(k_0T_s+kT)\in U_{\mathbb{R}^n}^{x^*}(\delta_Q')$. In the former case, it results from \eqref{eq:lya_mod}, and the choice of $\omega^*_4$, that $V(x(k_0T_s+(k+1)T)\!)<V(x(k_0T_s+kT)\!)$, and hence $x(k_0T_s+(k+1)T)\in U_{\mathbb{R}^n}^{x^*}(\delta_V')$.  In the latter case, we know from \eqref{eq:traj_mod_lb} that $x(k_0T_s+(k+1)T)\in U_{\mathbb{R}^n}^{x^*}(0.5\delta_Q'+0.5\mins{\delta_B,\delta_V})$, and hence $x(k_0T_s+(k+1)T)\in U_{\mathbb{R}^n}^{x^*}(\delta_V')$. One may thus repeat the same reasoning to conclude that, for every $k_0\in\mathbb{Z}$, $x_0\in U_{\mathbb{R}^n}^{x^*}(\delta_V')$, and $\omega\in(\omega^*_4,\infty)$, the trajectory of system \eqref{eq:solution_ON_mod}, through $x(k_0T_s)=x_0$, satisfies $x(t)\in U_{\mathbb{R}^n}^{x^*}(\delta_W)$, for all $t\in[k_0T_s,k_0T_s+k_TT]$, and $x(k_0T_s+k_TT)\in U_{\mathbb{R}^n}^{x^*}(\delta_V')$. 

Therefore, Lemma \ref{lem:approx_gradient_average} ensures that, for all $k_0\in\mathbb{Z}$, and $\omega\in(\omega^*_4,\infty)$, if $x(k_0T_s)\in U_{\mathbb{R}^n}^{x^*}(\delta_V')$, then  it holds $g(k_0T_s+k_TT)=-\rho \nabla h(x(k_0T_s+k_TT)\!)+R_T$, with $\norm{R_T}<M\omega^{-0.5}$ and $M$ such that $\norm{\pazocal{L}_{f_m}\pazocal{L}_{f_j}f_i(h(x)\!)}<M l^{-3}C^2$, for all $x\in U_{\mathbb{R}^n}^{x^*}(\delta_W)$ and $i,j,m\in\{1,2,...,l\}$. For all $k_0\in\mathbb{Z}$, $x_0\in U_{\mathbb{R}^n}^{x^*}(\delta_V')$, and $\omega\in(\omega^*_4,\infty)$, the trajectory of system \eqref{eq:solution_ON_mod}, through $x(k_0T_s)=x_0$, follows thus
\begin{multline}
\!\!\!\!\!\!\norm{x(t)-x^*}^2\!=\!\norm{x(k_0T_s+k_TT)-x^*}^2-\rho_2(t-k_0T_s-k_TT)\\ \times[2(x(k_0T_s+k_TT)-x^*)^T (\rho\nabla h(x(k_0T_s+k_TT)\!)-R_T) \\-\rho_2(t-k_0T_s-k_TT)(\rho^2\norm{\nabla h(x(k_0T_s+k_TT)\!)}^2+\norm{R_T}^2\\-2\rho\nabla^T h(x(k_0T_s+k_TT)\!)R_T)],
\end{multline}
for all $t\in[k_0T_s+k_TT,(k_0+1)T_s]$, which may be bounded as follows
\begin{multline}
\!\!\!\!\!\!\norm{x(t)-x^*}^2\!\leq\! \norm{x(k_0T_s+k_TT)-x^*}^2-\rho_2(t-k_0T_s-k_TT)\\ \times[2\rho(x(k_0T_s+k_TT)-x^*)^T\nabla h(x(k_0T_s+k_TT)\!)-2M\omega^{-0.5}\delta_V'\\-\rho_2 (T_s-k_TT)(M_G^2+M^2\omega^{-1}+2M_GM\omega^{-0.5})],
\label{eq:average_decrease_nd_lie}
\end{multline}
for all $t\in[k_0T_s+k_TT,(k_0+1)T_s]$, with $M_G:=\rho\sup_{x\in U_{\mathbb{R}}^{x^*}(\delta_V')} \norm{\nabla h(x)}$. Select now
\begin{equation}
\omega^*_5:=\maxs{\omega_4^*,4M^2\delta_V^2/(\rho m_g)^2}
\end{equation}
and
\begin{multline}
\rho_3^*(\omega):=\min\left\lbrace\rho^*_2(\omega),\right.\\\left.\rho m_g/(T_s(M_G^2+M^2/\omega^*+2M_GM/\sqrt{\omega^*})\!)\right\rbrace.
\end{multline} 
It results then from \eqref{eq:average_decrease_nd_lie} that, if $x(k_0T_s+k_TT)\in U_{\mathbb{R}}^{x^*}(\delta_V')\backslash U_{\mathbb{R}}^{x^*}(\delta_Q)$, then $\norm{x(t)-x^*}<\norm{x(k_0T_s+k_TT)-x^*}$ and, hence, $x(t)\in U_{\mathbb{R}}^{x^*}(\delta_V')$, for all $t\in[k_0T_s+k_TT,(k_0+1)T_s]$, $\omega\in(\omega^*_5,\infty)$ and $\rho_2\in(0,\rho^*_3(\omega)\!)$.  

Furthermore, define
\begin{multline}
\rho^*_4(\omega)\!\\=\!\min\left\lbrace\rho^*_3(\omega),\frac{0.5(\mins{\delta_B,\delta_V}-\delta_Q)}{T_s\!\left(\rho\sup_{x\in U_{\mathbb{R}}^{x^*}(\delta_B)}\!\norm{\nabla h(x)}\!+\!M/\sqrt{\omega}\right)\!}\!\right\rbrace\!.
\end{multline}  
We may write from \eqref{eq:solution_ON_mod} 
\begin{multline}
\!\!\!\!\!\!	\norm{x(t)-x^*}\leq \norm{x(k_0T_s+k_TT)-x^*}+\rho_2(t-k_0T_s-k_TT)\\ \times(\rho\norm{\nabla h(x(k_0T_s+k_TT)\!)}+\norm{R_T}),
\label{eq:norm_traj_mod}
\end{multline}
for all $t\in[k_0T_s+k_TT,(k_0+1)T_s]$. Therefore, if $x(k_0T_s+k_TT)\in U_{\mathbb{R}}^{x^*}(\delta_Q)$, then $x(t)\in U_{\mathbb{R}}^{x^*}(0.5\delta_Q+0.5\mins{\delta_B+\delta_V})$, and hence $x(t)\in U_{\mathbb{R}}^{x^*}(\delta_V')$, for all  $t\in[k_0T_s+k_TT,(k_0+1)T_s]$, $\omega\in(\omega^*_5,\infty)$, and $\rho_2\in(0,\rho^*_4(\omega)\!)$. 

To sum up, we just proved that, for all $k_0\in\mathbb{Z}$, $x_0\in U_{\mathbb{R}^n}^{x^*}(\delta_V')$, $\omega\in(\omega^*_5,\infty)$, and $\rho_2\in(0,\rho^*_4(\omega)\!)$, the trajectory of system \eqref{eq:solution_ON_mod}, through $x(k_0T_s)=x_0$, satisfies $x(t)\in U_{\mathbb{R}^n}^{x^*}(\delta_W)$, for all $t\in[k_0T_s,(k_0+1)T_s]$, and $x(\!(k_0+1)T_s)\in U_{\mathbb{R}^n}^{x^*}(\delta_V')$. 

Combining all the results, one may thus conclude that, for all $t_0\in\mathbb{R}$, $\omega\in(\omega^*_5,\infty)$, and $\rho_2\in(0,\rho^*_4(\omega)\!)$, if $x(t)\in U_{\mathbb{R}}^{x^*}(\delta_V)$, for all $t\in[t_0-T,t_0]$, then $x(t)\in  U_{\mathbb{R}}^{x^*}(\delta_W)$, for all $t\geq t_0$, concluding the boundedness property.

\textit{Step 2: Stability Property.} Let us follow a reasoning similar to the one used in \textit{Step 1}. Note that the proof of Theorem \ref{lem:classical_source} ensures the existence of an $\omega^*\in(\omega^*_5,\infty)$ such that, for all $\omega\in(\omega^*,\infty)$, $k\in\mathbb{Z}$, and $t_0\in[kT_s,kT_s+k_TT)$, if $x(t_0)\in U_{\mathbb{R}^n}^{x^*}(\delta_Q)$, then $x(t)\in U_{\mathbb{R}^n}^{x^*}(0.5\delta_Q+0.5\delta_Q')$, for all $t\in[t_0,kT_s+k_T T]$. Furthermore, in virtue of Assumptions \ref{as:cost} and \ref{as:vector_fields_gen}, there exists an $M\in\mathbb{R}_{>0}$ such that $\norm{f_i(h_m(t,x)\!)}<M$, for all $i\in\{1,2,...,l\}$, $t\in\mathbb{R}$, and $x\in U^{x^*}_{\mathbb{R}^n}(0.5\delta_Q+0.5\delta_Q')$. Select $\rho^*_5(\omega)=\mins{\rho^*_4(\omega),(0.5\delta_Q'-0.5\delta_Q)/(\sqrt{\omega}lM)(T_s-k_TT)}$. For all $k\in\mathbb{Z}$, $t_0\in[kT_s+k_TT,(k+1)T_s)$, $\omega\in(\omega^*,\infty)$, and $\rho_2\in(0,\rho_5^*(\omega)\!)$, if $x(t)\in U_{\mathbb{R}^n}^{x^*}(0.5\delta_Q+0.5\delta_Q')$, for all $t\in[t_0-T,t_0]$, it holds then $\norm{g(\floor{t/T_s}T_s+k_TT)}<\sqrt{\omega}lM$ and, hence, $x(t)\in U_{\mathbb{R}^n}^{x^*}(\delta_Q')$, for all $t\in[t_0,(k+1)T_s]$. One may thus conclude that, for all $t_0\in\mathbb{R}$,  $\omega\in(\omega^*,\infty)$, and $\rho_2\in(0,\rho_5^*(\omega)\!)$, if $x(t)\in U_{\mathbb{R}^n}^{x^*}(\delta_Q)$, for all $t\in[t_0-T,t_0]$, then it holds $x(t)\in  U_{\mathbb{R}^n}^{x^*}(\delta_Q')$, for all $t\in[t_0,t_0+T_s-\modi{t_0,T_s}]$. We will now show that, for all $k_0\in\mathbb{Z}$, $x_0\in U_{\mathbb{R}}^{x^*}(0.5\delta_Q'+0.5\mins{\delta_V,\delta_B})$, $\omega\in(\omega^*,\infty)$, and $\rho_2\in(0,\rho^*_5(\omega)\!)$, the trajectory of system \eqref{eq:solution_ON_mod}, through $x(k_0T_s)=x_0$, fulfills $x(t)\in U_{\mathbb{R}}^{x^*}(\delta_B)$, for all $t\geq k_0T_s$. Let $k_0\in\mathbb{Z}$, $k\in\{0,...,k_T-1\}$, $x(k_0T_s+kT)\in U_{\mathbb{R}}^{x^*}(0.5\delta_Q'+0.5\mins{\delta_V,\delta_B})$, $\omega\in(\omega^*,\infty)$, and $\rho_2\in(0,\rho^*_5(\omega)\!)$. Note that, since $\delta_Q'\in(0,\delta_V)$, it results from \eqref{eq:traj_mod_lb} that $x(t)\in U_{\mathbb{R}}^{x^*}(\mins{\delta_B,\delta_V})$ and, hence,  $x(t)\in U_{\mathbb{R}}^{x^*}(\delta_B)$, for all $t\in[k_0T_s+kT,k_0T_s+(k+1)T]$. We also know from \eqref{eq:traj_mod_lb} that, if $x(k_0T_s+kT)\in U_{\mathbb{R}}^{x^*}(\delta_Q')$, then $x(k_0T_s+(k+1)T)\in U_{\mathbb{R}}^{x^*}(0.5\delta_Q'+0.5\mins{\delta_B,\delta_V})$. Furthermore, if $x(k_0T_s+kT)\in U_{\mathbb{R}}^{x^*}(0.5\delta_Q'+0.5\mins{\delta_B,\delta_V})\backslash U_{\mathbb{R}}^{x^*}(\delta_Q')$, it results from \eqref{eq:lya_mod} that $\norm{x(k_0T_s+(k+1)T)-x^*}<\norm{x(k_0T_s+kT)-x^*}$ and, hence, $x(k_0T_s+(k+1)T)\in U_{\mathbb{R}}^{x^*}(0.5\delta_Q'+0.5\mins{\delta_B,\delta_V})$. One may thus repeat the reasoning to conclude that, for all $k_0\in\mathbb{Z}$, $x_0\in U_{\mathbb{R}}^{x^*}(0.5\delta_Q'+0.5\mins{\delta_V,\delta_B})$, $\omega\in(\omega^*,\infty)$, and $\rho_2\in(0,\rho^*_5(\omega)\!)$, the trajectory of system \eqref{eq:solution_ON_mod}, through $x(k_0T_s)=x_0$, fulfills i) $x(t)\in  U_{\mathbb{R}}^{x^*}(\delta_B)$, for all $t\in[k_0T_s,k_0T_s+k_TT]$, and ii) $x(k_0T_s+k_TT)\in  U_{\mathbb{R}}^{x^*}(0.5\delta_Q'+0.5\mins{\delta_B,\delta_V})$. Two cases may then be distinguished: either $x(k_0T_s+k_TT)\in U_{\mathbb{R}}^{x^*}(\delta_Q)$, or $x(k_0T_s+k_TT)\in U_{\mathbb{R}}^{x^*}(0.5\delta_Q'+0.5\mins{\delta_B,\delta_V})\backslash U_{\mathbb{R}}^{x^*}(\delta_Q)$. In the former case, it holds from \eqref{eq:norm_traj_mod} that $x(t)\in U_{\mathbb{R}}^{x^*}(0.5\delta_Q'+0.5\mins{\delta_B,\delta_V})$ and, hence, $x(t)\in U_{\mathbb{R}}^{x^*}(\delta_B)$, for all $t\in[k_0T_s+k_TT,(k_0+1)T_s]$, $\omega\in(\omega^*,\infty)$, and $\rho_2\in(0,\rho^*_5(\omega)\!)$. In the latter case, it results from \eqref{eq:average_decrease_nd_lie} that $\norm{x(t)-x^*}<\norm{x(k_0T_s+k_TT)-x^*}$ and, hence, $x(t)\in U_{\mathbb{R}}^{x^*}(0.5\delta_Q'+0.5\mins{\delta_B,\delta_V})$, for all $t\in[k_0T_s+k_TT,(k_0+1)T_s]$, $\omega\in(\omega^*,\infty)$, and $\rho_2\in(0,\rho^*_5(\omega)\!)$. In both cases, one obtains $x(\!(k_0+1)T_s)\in U_{\mathbb{R}}^{x^*}(0.5\delta_Q'+0.5\mins{\delta_B,\delta_V})$, for all $\omega\in(\omega^*,\infty)$, and $\rho_2\in(0,\rho^*_5(\omega)\!)$. The stability property may thus be concluded by iterating the reasoning with $k_0=k_0+T_s$ and $x_0=x(\!(k_0+1)T_s)$.

\textit{Step 3: Practical Convergence.} We proved in \textit{Step 1} that, for all $t_0\in\mathbb{R}$,  $\omega\in(\omega^*,\infty)$, and $\rho_2\in(0,\rho_5^*(\omega)\!)$, if $x(t)\in U_{\mathbb{R}^n}^{x^*}(\delta_V)$, for all $t\in[t_0-T,t_0]$, then it holds $x(t_0+T_s-\modi{t_0,T_s}\!)\in  U_{\mathbb{R}^n}^{x^*}(\delta_V')$. Referring to the stability property we proved in \textit{Step 2}, it is thus sufficient to show that, for every $\omega\in(\omega^*,\infty)$, and $\rho_2\in(0,\rho^*_5(\omega)\!)$, there exists a $k_1\in\mathbb{N}_{>0}$ such that, for every $k_0\in\mathbb{Z}$, and $x(k_0T_s)\in U_{\mathbb{R}}^{x^*}(\delta_V')$, there exists a $k_1'\in\{0,...,k_1-1\}$ and a $k_1''\in\{0,...,k_T\}$ such that  $x(\!(k_0+k_1')T_s+k_1''T)\in U_{\mathbb{R}}^{x^*}(0.5\delta_Q'+\mins{\delta_B,\delta_V})$. The practical convergence property would indeed be concluded with $t_1:=(1+k_1)T_s$. Let $\omega\in(\omega^*,\infty)$ and $\rho_2\in(0,\rho^*_5(\omega)\!)$. We already know from \textit{Step 1} that, for every $k_0\in\mathbb{Z}$, if $x(k_0T_s)\in U_{\mathbb{R}}^{x^*}(\delta_V')$, then $x(\!(k_0+k)T_s+k'T)\in U_{\mathbb{R}}^{x^*}(\delta_V')$, for every $k'\in\{0,...,k_T\}$ and $k\in\mathbb{N}$. Moreover, it follows from \eqref{eq:lya_mod} that there exists an $m_d\in\mathbb{R}_{>0}$ such that, for every $k_0\in\mathbb{Z}$, and $k'\in\{0,...,k_T-1\}$, if $x(k_0T_s+k'T)\in U_{\mathbb{R}}^{x^*}(\delta_V')\backslash U_{\mathbb{R}}^{x^*}(\delta_Q)$, then $\norm{x(k_0T_s+(k'+1)T)-x^*}<\norm{x(k_0T_s+k'T)-x^*}-m_d$. Similarly, we may deduce from \eqref{eq:average_decrease_nd_lie} the existence of an $m_d'\in\mathbb{R}_{>0}$ such that, for every $k_0\in\mathbb{Z}$, if  $x(k_0T_s+k_TT)\in U_{\mathbb{R}}^{x^*}(\delta_V')\backslash U_{\mathbb{R}}^{x^*}(\delta_Q)$, then $\norm{x(k_0+1)T_s)-x^*}<\norm{x(k_0T_s+k_TT)-x^*}-m_d'$. To show that $k_1^*:=\ceil\left(\!(\delta_V'-\delta_Q)/(m_dk_T+m_d')\!\right)+1$ is a suited value of $k_1$, let us proceed by contradiction. Namely, let us assume that there exist a $k_0'\in\mathbb{Z}$, and an $x(k_0'T_s)\in U_{\mathbb{R}^n}^{x^*}(\delta_V')$, such that $x(\!(k_0'+k)T_s+k'T)\not\in U_{\mathbb{R}^n}^{x^*}(0.5\delta_Q'+\mins{\delta_B,\delta_V})$, for all $k\in\{0,...,k_1^*-1\}$ and $k'\in\{0,...,k_T\}$. We may then write that $\norm{x(\!(k_0'+k^*_1-1)T_s+k_TT)-x^*}<\norm{x(k_0'T_s)-x^*}-k^*_1k_Tm_d-(k_1^*-1)m_d'$, yielding $\norm{x(\!(k_0'+k^*_1-1)T_s+k_TT)-x^*}\leq \delta_Q$. This leads to a contradiction and concludes thus the proof.
\bibliographystyle{IEEEtran}
\bibliography{biblio}

\end{document}